\theoremstyle{plain}
\newtheorem{theorem}{Theorem } 
\newtheorem{prop}[theorem]{Proposition}
\newtheorem{cor}[theorem]{Corollary}
\newtheorem{lemma}[theorem]{Lemma}
\theoremstyle{remark}
\newtheorem{rmk}[theorem]{Remark}
\newtheorem{defn}[theorem]{Definition}
\newcommand{\cB}{\mathcal{B}}
\newcommand{\cC}{\mathcal{C}}
\newcommand{\cH}{{\mathcal H}}
\newcommand{\cL}{\mathcal{L}}
\newcommand{\cU}{\mathcal{U}}
\newcommand{\diam}{{\rm diam}}
\newcommand{\Lip}{{\rm Lip}}
\newcommand{\R}{{\mathbb R}}
\newcommand{\bN}{{\mathbb N}}
\newcommand{\norm}[1]{\left\| #1 \right\|}
\newcommand{\abs}[1]{\left| #1 \right|}
\begin{document}

\title[Disintegration of Hyperbolic Skew Products]{Disintegration of Invariant Measures for Hyperbolic Skew Products}
\keywords{Hyperbolic skew product, Mixing rates, SRB measures.}
\subjclass[2010]{Primary: 37A25;   Secondary:  37C30}
\thanks{O.B. was supported by the Austrian Science Fund, Lise Meitner position M1583.
I.M. was supported in part by European Advanced Grant
StochExtHomog (ERC AdG 320977).
We are grateful to Vitor Ara{\'u}jo and Paulo Varandas for helpful comments.
We are especially grateful to the referee for pointing out numerous statements that required correction and/or clarification.}

\author{Oliver Butterley}
\address{Fakult\"at f\"ur Mathematik, Universit\"at Wien,
Oskar-Morgenstern-Platz 1, 1090 Wien, Austria.
Current address: ICTP, Strada Costiera, 11, I-34151 Trieste, Italy.} 
\email{oliver.butterley@ictp.it}

\author{Ian Melbourne}
\address{Mathematics Institute, University of Warwick, Coventry, CV4 7AL, UK.}
\email{i.melbourne@warwick.ac.uk}
\date{\today}

\begin{abstract}
We study hyperbolic skew products and the disintegration  of the SRB measure into measures supported on local stable manifolds.
Such a disintegration gives a method for passing from an observable $v$ on the skew product to an observable $\bar v$ on the system quotiented along stable manifolds.
Under mild assumptions on the system we prove that the disintegration preserves the smoothness of $v$, firstly in the case where $v$ is H\"older and secondly in the case where $v$ is $\cC^{1}$.
\end{abstract}

\maketitle
\thispagestyle{empty}


\section{Introduction}

We suppose throughout that
$\widehat F:\widehat\Delta\to\widehat\Delta$ has the form of a skew product map
so $\widehat\Delta=\Delta\times N$ are compact metric spaces and
\[
\widehat F(x,z)=(Fx,G(x,z))
\]
 where
$F:\Delta\to\Delta$ and $G:\Delta\times N\to N$ are continuous.
Moreover, we suppose that $\nu$ is an $F$-invariant Borel probability
measure on $\Delta$.
Let $\pi:\widehat\Delta\to\Delta$ be the projection $\pi(x,z)=x$ and note that
$\pi$ defines a semiconjugacy between $\widehat F$ and $F$, i.e.,  $F\circ \pi = \pi \circ \widehat F$.
In different sections of this note we will require the system to satisfy different degrees of regularity but the basic setting is for $F$ to be a uniformly expanding map and for $\widehat F$ to be uniformly contracting in the fibre direction in the sense that 
\begin{equation}
\label{eq:contraction}
\diam\,\widehat F^n\pi^{-1}(x)\to0 \quad \text{as $n\to\infty$, uniformly in $x$}.
\end{equation}

In order to study statistical properties of $\widehat{F}$ it is often convenient to study the statistical properties of the expanding map $F$ and then use this to deduce the behaviour for the hyperbolic map $\widehat{F}$.
This involves associating observables on $\widehat{\Delta}$ to observables on $\Delta$ and the consideration of the possible loss of regularity involved in this process. 
In the symbolic setting, this corresponds to the argument where one-sided observables can be used to approximate two-sided observables (see, for example~\cite[\S1.A]{Bowen75}).

Here we pursue a different approach, inspired by~\cite{AGY06}.
Suppose that $\nu$ is an $F$-invariant probability measure 
on $\Delta$.   A standard construction (see Section~\ref{sec:eta}) yields an $\widehat F$-invariant probability measure $\eta$ on $\widehat\Delta$ such that $\nu = \pi_{*} \eta$. 
We are interested in a disintegration $\smash{{\{\eta_{x}\}}_{x\in \Delta}}$ of $\eta$ in the sense  that each $\eta_{x}$ is a Borel probability measure on $\widehat{\Delta}$ supported on $\pi^{-1}(x)$ and 
\[
\eta(v) ={ \int_{\Delta} \eta_{x}(v) \ d\nu(x)}
\]
 for all continuous $\smash{v:\widehat{\Delta} \to \R}$.
Then, in a natural way, $x\mapsto \bar v=\eta_{x}(v)$ is the observable on $\Delta$ associated to the observable $\smash{v:\widehat{\Delta} \to \R}$. 

The existence (and uniqueness) of such a disintegration goes back to Rohlin~\cite{Rohlin52} in a rather more general context.
The purpose of this note is to study the regularity of the disintegration ${\{\eta_{x}\}}_{x\in \Delta}$ in the sense that the regularity of $v$ is inherited
by $\bar v$.
Such questions are important when studying rates of mixing for hyperbolic systems. 
For example, in~\cite{AGY06} exponential mixing is first proved for a hyperbolic semiflow and then lifted  to the hyperbolic flow using a regularity result for the disintegration as described above. 
In their setting the measure $\eta$ has a smooth density and so the regularity of the disintegration is immediate~\cite[Lemma 4.3]{AGY06}. 
We consider the case where $\nu$ is absolutely continuous, but make no such assumption on $\eta$.
At first glance the situation appears rather bad since in general the invariant measure $\eta$ could be singular along the stable manifolds. 
This turns out not to be a problem and good regularity of the disintegration is still possible in these situations. 
Such a result, in the case when the invariant density is singular along stable manifolds, is required in~\cite{ABV14,AMV14,AV12}. 
In the situations studied in those references, there is a $\cC^k$ global stable foliation where $k=1+\alpha$ or $k=2$.
After a $\cC^k$ change of coordinates, we obtain a skew product map $\hat F$ such
that $F$ and $G$ are $\cC^k$, and  our main results exploit this information.
In general such good regularity of the stable foliation cannot be expected but in many cases, for instance under domination conditions or in low dimensions, the regularity is good (see, for example \cite{ABV14, AV12}).

In Section~\ref{sec:eta}, we recall the argument for the existence of the invariant measure for the hyperbolic system.
Then, in Section~\ref{sec:disint}, we present the construction of the disintegration along stable manifolds.
These sections do not require specific assumptions on $F$, $\nu$, or the rate of contraction 
in~\eqref{eq:contraction}.

Sections~\ref{sec:holder} and~\ref{sec:smooth} contain our main results on
the regularity of the disintegration,
firstly for the  H\"older case and secondly for the $\cC^{1}$ case. 
To prove these results, we require additional regularity assumptions on $F$ and $G$,
absolute continuity of $\nu$ and exponential contraction 
in~\eqref{eq:contraction}.

\section{Invariant Measure on $\widehat\Delta$}
\label{sec:eta} 

In this section, we recall the standard argument for
constructing an invariant measure $\eta$ for $\widehat F:\widehat \Delta\to \widehat \Delta$ (see for example~\cite[Section~6]{APPV09}).
This construction makes use of the invariant measure $\nu$ for $F$ together with
the contracting stable foliation, but the details of the map $F:\Delta\to\Delta$ and the rate of contraction are not required.

\begin{prop} 
 \label{prop:mux}
	Given $v:\widehat{\Delta}\to\R$, define $v_+,v_-:\Delta\to\R$ by setting
$v_+(x)=\sup_{z}v(x,z)$,
$v_-(x)=\inf_{z}v(x,z)$.
Then the limits
\[
\lim_{n\to\infty}\int_{\Delta} {(v\circ \widehat F^n)}_+\,d\nu\quad\text{and}\quad
\lim_{n\to\infty}\int_{\Delta} {(v\circ \widehat F^n)}_-\,d\nu
\]
exist and coincide for all $v$ continuous.  Denote the common limit by $\eta(v)$.
This defines an $\widehat{F}$-invariant probability measure $\eta$ on $\widehat{\Delta}$ and $\pi_*\eta=\nu$.
\end{prop}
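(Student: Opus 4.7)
The plan is to show the prescribed limit exists and defines a positive linear functional on $C(\widehat\Delta)$, then invoke the Riesz representation theorem. The two main tasks are to verify that the sup-limit and the inf-limit each exist, and that they agree.

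Both claims reduce to a monotonicity estimate coming from the skew-product form. Writing $\widehat F(x,z)=(Fx,G(x,z))$ and using $\widehat F^{n+1}(x,z)=\widehat F^{n}(Fx,G(x,z))$, we have
\[
(v\circ \widehat F^{n+1})_+(x)
=\sup_{z\in N} v\bigl(\widehat F^{n}(Fx,G(x,z))\bigr)
\leq \sup_{z'\in N} v\bigl(\widehat F^{n}(Fx,z')\bigr)
=(v\circ \widehat F^{n})_+(Fx).
\]
Integrating and using $F$-invariance of $\nu$ shows that $a_n:=\int_\Delta (v\circ \widehat F^{n})_+\,d\nu$ is non-increasing; since $v$ is bounded on the compact space $\widehat\Delta$, the sequence is bounded below and hence converges. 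The symmetric argument gives $b_n:=\int_\Delta (v\circ \widehat F^{n})_-\,d\nu$ non-decreasing and bounded above.

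To show $a_n$ and $b_n$ have the same limit, observe that $(v\circ \widehat F^{n})_+(x)-(v\circ \widehat F^{n})_-(x)$ is the oscillation of $v$ on $\widehat F^{n}(\pi^{-1}(x))\subset\pi^{-1}(F^n x)$. By \eqref{eq:contraction} these sets have diameter tending uniformly to zero, and $v$ is uniformly continuous on the compact space $\widehat\Delta$, so this oscillation tends uniformly to zero and hence $a_n-b_n\to0$. Denote the common limit by $\eta(v)$.

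The remaining verifications are routine. Linearity of $\eta$ follows by sandwiching $((v+w)\circ \widehat F^{n})_\pm$ between $(v\circ \widehat F^{n})_\pm+(w\circ \widehat F^{n})_\pm$; positivity and $\eta(1)=1$ are immediate, so Riesz yields a Borel probability measure $\eta$. Invariance $\eta(v\circ \widehat F)=\eta(v)$ is clear since the defining sequence is merely shifted by one index. Finally, for $\phi\in C(\Delta)$ set $v=\phi\circ\pi$; then $v\circ \widehat F^n=\phi\circ F^n\circ\pi$ is constant on fibres, so $(v\circ \widehat F^n)_\pm=\phi\circ F^n$ and the defining integrals equal $\int\phi\,d\nu$ for every $n$ by $F$-invariance, yielding $\pi_*\eta=\nu$. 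I do not anticipate any substantive obstacle; the work is concentrated in the one-line monotonicity estimate above, and \eqref{eq:contraction} enters only to force the sup- and inf-limits to coincide.
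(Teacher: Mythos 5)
Your proposal is correct and follows essentially the same route as the paper: the one-line monotonicity estimate $(v\circ\widehat F^{n+1})_+(x)\le (v\circ\widehat F^{n})_+(Fx)$ combined with $F$-invariance of $\nu$, the uniform contraction \eqref{eq:contraction} plus uniform continuity to force the sup- and inf-limits to coincide, and linearity by comparing the sup-definition (subadditivity) with the inf-definition (superadditivity) before invoking Riesz. The only cosmetic difference is that you spell out $\pi_*\eta=\nu$ via $v=\phi\circ\pi$, which the paper leaves as immediate.
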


\begin{proof}
	Let $v_n^{\pm}=\int_\Delta {(v\circ \widehat F^n)}_{\pm}\,d\nu$.
We have
\begin{align*} 
	{(v\circ \widehat F^{n+1})}_+(x) & =\sup_z v\circ \widehat F^{n+1}(x,z)
=\sup_z v\circ \widehat F^n(Fx,G(x,z))
\\ & \le \sup_z v\circ \widehat F^n(Fx,z)
={(v\circ \widehat F^n)}_+(Fx).
\end{align*}
By $F$-invariance of $\nu$,
\[
	v_{n+1}^+=\int_\Delta {(v\circ \widehat F^{n+1})}_+\,d\nu\le 
	\int_\Delta {(v\circ \widehat F^n)}_+\circ F\,d\nu
	=\int_\Delta {(v\circ \widehat F^n)}_+\,d\nu=v_n^+.
\]
Hence $v_n^+$ is a monotone decreasing sequence bounded below by $-{|v|}_\infty$, and consequently 
 $\lim_{n\to\infty}v_n^+$ exists.  Similarly
$\lim_{n\to\infty}v_n^-$ exists.

Next, using uniform continuity of $v$ and the fact that $\diam\, \widehat F^n\pi^{-1}(x)\to0$ as $n\to\infty$ for each $x\in\Delta$,
\begin{align*}
	{(v\circ \widehat F^n)}_+(x)-{(v\circ \widehat F^n)}_-(x)
& =\sup_z v\circ \widehat F^n(x,z)- \inf_z v\circ \widehat F^n(x,z)
\\ & \le \sup_{z\in \widehat{F}^n\pi^{-1}(x)}v - \inf_{z\in \widehat{F}^n\pi^{-1}(x)}v \to0.
\end{align*}
Hence $\lim_{n\to\infty}v_n^+= \lim_{n\to\infty}v_n^-$.

Since $\int_{\widehat{\Delta}}v\,d\eta=\lim_{n\to\infty}\int_\Delta {(v\circ \widehat F^n)}_+\,d\nu$
we know  $\int_{\widehat{\Delta}}(v_1+v_2)\,d\eta\le \int_{\widehat{\Delta}}v_1\,d\eta+\int_{\widehat{\Delta}}v_2\,d\eta$.  Similarly, using
$\int_{\widehat{\Delta}}v\,d\eta=\lim_{n\to\infty}\int_\Delta {(v\circ \widehat F^n)}_-\,d\nu$
it follows that $\int_{\widehat{\Delta}}(v_1+v_2)\,d\eta\ge \int_{\widehat{\Delta}}v_1\,d\eta+\int_{\widehat{\Delta}}v_2\,d\eta$.  Hence $v\mapsto \int_{\widehat{\Delta}}v\,d\eta$ defines a linear functional on the space of continuous functions.  Clearly $\int_{\widehat{\Delta}}v\,d\eta\ge0$ whenever $v\ge0$, and $\int_{\widehat{\Delta}} 1\,d\eta = 1$, so $\eta$ is a probability measure.
Moreover, $\widehat F$-invariance of $\eta$ is immediate from the definition
$\int_{\widehat{\Delta}}v\,d\eta=\lim_{n\to\infty}\int_\Delta {(v\circ \widehat F^n)}_+\,d\nu$.
Finally, the fact that $\pi_*\eta=\nu$ is immediate from the definitions and the
invariance of $\nu$.
\end{proof}

\begin{rmk}
(a) In~\cite[Corollary~6.4]{APPV09}, it is shown that ergodicity of $\nu$
implies ergodicity of $\eta$.

\noindent (b)  
Given an ergodic $F$-invariant probability measure $\nu$, 
Proposition~\ref{prop:mux} shows how to construct an ergodic $\widehat F$-invariant measure $\eta$ with $\pi_*\eta=\nu$.    

Conversely, suppose that we are given an ergodic $\widehat F$-invariant probability measure $\eta_0$.
Then $\nu=\pi_*\eta_0$ is an
an ergodic $F$-invariant probability measure and gives rise via Proposition~\ref{prop:mux} to an 
ergodic $\widehat F$-invariant probability measure $\eta$.   We claim that $\eta=\eta_0$.

Indeed, suppose that $\eta_1$, $\eta_2$ are two 
ergodic $\widehat F$-invariant probability measures such that 
$\pi_*\eta_1=\pi_*\eta_2=\nu$.  We show that $\eta_1=\eta_2$.\footnote{We are grateful to Vitor Ara\'ujo for pointing out this argument.}
Let $v:\widehat\Delta\to\R$ be continuous and
define $S_n=n^{-1}\sum_{j=0}^{n-1}v\circ\widehat F^j$.
By the ergodic theorem, $\lim_{n\to\infty}S_n=\int v\,d\eta_i$ on a set $\widehat E_i\subset\widehat\Delta$ with $\eta_i(\widehat E_i)=1$ for $i=1,2$.
The proof of Proposition~\ref{prop:mux} shows that $\lim_{n\to\infty}n^{-1}\sum_{j=0}^{n-1}v\circ \widehat F^j(x,z)$ is independent of $z$ so
for $i=1,2$, there exist sets $E_i\subset\Delta$ with $\nu(E_i)=1$ such that
$\pi^{-1}E_i=\widehat E_i$.
In particular, $\widehat E_1\cap \widehat E_2\neq\emptyset$, and hence $\int v\,d\eta_1=\int v\,d\eta_2$.  Since $v$ is an arbitrary continuous function, $\eta_1=\eta_2$ as required.
\end{rmk}
  
\section{Disintegration}
\label{sec:disint}

In this section, we assume the same set up as in Section~\ref{sec:eta}.
Let $\cU:L^1(\Delta)\to L^1(\Delta)$ denote the Koopman operator
$\cU w=w\circ F$ corresponding to $F:\Delta\to\Delta$.
Define the transfer operator
$\cL:L^1(\Delta)\to L^1(\Delta)$ given by $\int_\Delta \cU w\, v \,d\nu=\int_\Delta w\,\cL v\,d\nu$ where $v\in L^1(\Delta)$, $w\in L^\infty(\Delta)$.

Let $0$ denote a distinguished point in $N$.
Following~\cite[Proposition~4.10 and Remark~4.11]{AV12}, we define $\eta_x$ almost everywhere as the limit as $n\to\infty$ of
$(\cL^nv_n)(x)$ where
$v_n(x)=v\circ \widehat F^n(x,0)$.
We note that the  argument below is considerably more direct and general than the one in~\cite[Section 4.4]{AV12}.  

\begin{prop}
\label{prop:etax}
For almost every $x\in\Delta$, the limit
 \[
	 \eta_x(v)=\lim_{n\to\infty} (\cL^nv_n)(x),\quad v_n(x)=v\circ \widehat F^n(x,0),
	 \]
	 exists for 
 every $v\in\cC^{0}(\widehat{\Delta})$ and 
	 defines a probability measure supported on $\pi^{-1}(x)$.

	 Moreover, for each $v\in\cC^0(\widehat\Delta)$, the map $x\mapsto \eta_{x}(v)$ lies in $L^\infty(\Delta)$ and 
\begin{equation}
\label{eq:defeta}
\eta(v) = \int_{\Delta} \eta_{x}(v) \ d\nu(x).
\end{equation}
\end{prop}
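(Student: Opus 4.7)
The plan is to prove that $(\mathcal{L}^n v_n)_{n\ge1}$ is Cauchy in $L^\infty(\Delta,\nu)$ for every continuous $v$ on $\widehat\Delta$, and then to package the pointwise limits into a disintegration using a density argument.

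\textbf{Preliminaries on $\mathcal L$.} Since $\nu$ is $F$-invariant one has $\mathcal L 1=1$, and testing $\int_A\mathcal Lv\,d\nu=\int_{F^{-1}A}v\,d\nu$ against indicator sets shows that $\mathcal L$ is a positive contraction on $L^\infty(\Delta,\nu)$. The defining duality gives the product identity $\mathcal L(v\cdot\mathcal Uw)=w\cdot\mathcal Lv$; iterating, $\mathcal L^k(\phi\circ F^k)=\phi$ for all $\phi\in L^\infty(\Delta)$.

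\textbf{Cauchy estimate.} Writing $\widehat F^k(x,0)=(F^k x,G_k(x))$, one has
\[
v_{n+k}(x)-v_n(F^k x)=v\bigl(\widehat F^n(F^k x,G_k(x))\bigr)-v\bigl(\widehat F^n(F^k x,0)\bigr),
\]
and both arguments lie in $\widehat F^n\pi^{-1}(F^k x)$. By uniform continuity of $v$ and \eqref{eq:contraction}, for each $\epsilon>0$ there is $N$ with $\|v_{n+k}-v_n\circ F^k\|_\infty<\epsilon$ for all $n\ge N$ and $k\ge0$. Using the preliminary identity,
\[
\mathcal L^{n+k}v_{n+k}-\mathcal L^n v_n=\mathcal L^n\mathcal L^k\bigl(v_{n+k}-v_n\circ F^k\bigr),
\]
whose $L^\infty$-norm is at most $\epsilon$. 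Hence $(\mathcal L^n v_n)$ converges uniformly off a $\nu$-null set to some $\bar v\in L^\infty(\Delta)$ with $\|\bar v\|_\infty\le\|v\|_\infty$.

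\textbf{Assembling the disintegration.} Fix a countable dense $\mathcal V\subset\mathcal C^0(\widehat\Delta)$ (possible since $\widehat\Delta$ is compact metric) and let $E\subset\Delta$ be a set of full $\nu$-measure on which $\mathcal L^n v_n(x)$ converges for every $v\in\mathcal V$. For $x\in E$ define $\eta_x(v)=\lim_n\mathcal L^n v_n(x)$ on $\mathcal V$; linearity, positivity, $\eta_x(1)=1$, and $|\eta_x(v)|\le\|v\|_\infty$ are immediate, so $\eta_x$ extends by continuity to a bounded positive normalised linear functional on $\mathcal C^0(\widehat\Delta)$, hence a Borel probability measure by Riesz representation. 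A short $\epsilon$-approximation then upgrades the pointwise convergence from $\mathcal V$ to every $v\in\mathcal C^0(\widehat\Delta)$. Applied to $v=\phi\circ\pi$, the construction gives $v_n=\phi\circ F^n$ and $\mathcal L^n v_n=\phi$, so $\eta_x(\phi\circ\pi)=\phi(x)$; thus $\pi_*\eta_x=\delta_x$, which is exactly the support property. For the integral formula, $\int_\Delta\mathcal L^n v_n\,d\nu=\int_\Delta v_n\,d\nu$ by testing $\mathcal L$ against $1$; the squeeze $(v\circ\widehat F^n)_-\le v_n\le(v\circ\widehat F^n)_+$ combined with Proposition \ref{prop:mux} gives $\int v_n\,d\nu\to\eta(v)$, while dominated convergence (with dominant $\|v\|_\infty$) on the left gives $\int_\Delta\mathcal L^n v_n\,d\nu\to\int_\Delta\eta_x(v)\,d\nu$, which is \eqref{eq:defeta}.

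\textbf{Main obstacle.} The crux is the Cauchy estimate: the correct quantity to compare $v_{n+k}$ with is $v_n\circ F^k$, because then $\mathcal L^k$ collapses $v_n\circ F^k$ to $v_n$ and what remains is governed solely by the modulus of continuity of $v$ and the fiber contraction \eqref{eq:contraction}.
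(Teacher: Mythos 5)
Your proof is correct and follows essentially the same route as the paper: the identity $\cL^{n+k}v_{n+k}-\cL^n v_n=\cL^{n+k}(v_{n+k}-v_n\circ F^k)$ combined with the $L^\infty$-contractivity of $\cL$ and the fibre contraction \eqref{eq:contraction} is exactly the paper's Cauchy estimate, and the separability/Riesz packaging and the squeeze against $(v\circ\widehat F^n)_\pm$ for \eqref{eq:defeta} also match. The only (equally valid) variation is your derivation of the support property via $\pi_*\eta_x=\delta_x$, where the paper instead observes directly that $(\cL^n v_n)(x)=0$ whenever $v$ vanishes on $\pi^{-1}(x)$.
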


\begin{proof}
	First we consider a fixed $v\in\cC^0(\widehat\Delta)$.
Since $\cL\cU=I$, we have
\[
\cL^nv_n-\cL^{n+m}v_{n+m}=\cL^{n+m}(\cU^mv_n-v_{n+m}).
\]
Now
\[
	(\cU^mv_n)(x)-v_{n+m}(x)=v\circ \widehat F^n(F^mx,0)-v\circ \widehat F^{n+m}(x,0).
\]
By contractivity of the stable foliation,
$\diam\,\widehat F^n\pi^{-1}(F^mx)\to0$ as $n\to\infty$ uniformly in $m$ and $x$.
Hence by uniform continuity of $v$,
\begin{equation}
 \label{eq:C0}
 	\abs{\cU^mv_n-v_{n+m}}_\infty \to0
\end{equation}
as $n\to\infty$ uniformly in $m$.

Since $\nu$ is $F$-invariant, it follows from the duality definition of $\cL$ that
$|\cL v|_\infty \le |v|_\infty$ for all $v\in L^\infty(\Delta)$.
Hence
\[
	\abs{\cL^nv_n-\cL^{n+m}v_{n+m}}_\infty\le 
\abs{\cU^mv_n-v_{n+m}}_\infty\to0,
\]
as $n,m\to\infty$.
That is, $\cL^nv_n$ defines a Cauchy sequence in $L^\infty(\Delta)$.  In particular, the limit $\eta_x(v)$ exists for almost every $x$.
Note also that $|\eta_x(v)|\le |v|_\infty$.

It follows from separability of $\cC^0(\widehat\Delta)$ that 
the functional $v\mapsto \eta_x(v)$ defines a bounded linear functional on $\cC^0(\widehat{\Delta})$ for almost every $x\in \Delta$.
Moreover $\eta_x$ is positive and normalised and hence is identified with a probability measure on $\widehat{\Delta}$.
If $v|_{\pi^{-1}(x)}\equiv0$, then $(\cL^nv_n)(x)=0$ for all $n$ and so $\eta_x(v)=0$.  
Hence $\eta_x$ is supported on $\pi^{-1}(x)$.

Finally,
\[
	\int_\Delta\eta_x(v)\,d\nu(x)=\lim_{n\to\infty}\int_\Delta \cL^nv_n\,d\nu=\lim_{n\to\infty}\int_\Delta v\circ \widehat F^n(x,0)\,d\nu(x).
\]
Hence
\begin{align*}
	\eta(v)-
	\int_\Delta\eta_x(v)\,d\nu(x) & =\lim_{n\to\infty}\int_\Delta \Bigl((v\circ \widehat F^n)_+(x)-v\circ \widehat F^n(x,0)\Bigr)\,d\nu(x),
\end{align*}
which again converges to zero,
so $\eta(v)=\int_\Delta\eta_x(v)\,d\nu(x)$.
\end{proof}

\begin{rmk}
It follows that property 2 and the first part of property 3 
in~\cite[Definition~2.5]{AGY06} are automatically satisfied.
\end{rmk}

\section{H\"older regularity}
\label{sec:holder}

In this section, we continue to assume the set up in Sections~\ref{sec:eta} and~\ref{sec:disint}.   
In addition we suppose that $\Delta$ is a Riemannian manifold (with boundary), that
$F:\Delta\to\Delta$ is a $\cC^{1+\alpha}$ uniformly expanding map, as defined below, for some $\alpha\in(0,1]$ with absolutely continuous invariant probability measure $\nu$, and that $G$ is Lipschitz.
(Normally this situation would arise when there is a $\cC^{1+\alpha}$ stable foliation, in which case
we would have also that $G$ is $\cC^{1+\alpha}$, but we do not make explicit use of this extra structure.)
In the case $\alpha=1$, $\cC^{1+\alpha}$ means $\cC^{1+\Lip}$.

We write $\|x-x'\|$ and $\|z-z'\|$ for distance on
$\Delta$ and $N$.

\begin{defn} \label{def:F}   Let $\alpha\in(0,1]$.
	The map $F:\Delta\to\Delta$ is {\em uniformly expanding} if there is an open and dense subset $\Delta_0\subset\Delta$ with an at most countable partition into open sets $U_i$
	such that $F|_{U_i}:U_i\to \Delta_0$ is a $C^{1+\alpha}$ diffeomorphism onto $\Delta_0$ and extends to a homeomorphism from $\bar U_i$ onto $\Delta$ for each $i$.
Moreover, 
let $\cH_n$ denote the set of inverse branches of $F^n$ and write
$J_h := |\det(Dh)|$.   We require that there
 exist constants $C_J$, $C_{\lambda}$, $\lambda>0$ such that
  \begin{equation}
  \label{eq:exp}
  \abs{Dh(x)} \leq C_{\lambda}e^{-\lambda n}, \quad
 |\log J_h(x)-\log J_h(x')|/\norm{x-x'}^\alpha \le C_J,
  \end{equation}
 for all $h \in \cH_n, n\in \bN$, $x,x'\in\Delta_0$, $x\neq x'$.
\end{defn}

Let $d$ be a further metric on $\Delta$ with the property that
there is a constant $C_1>0$ such that $\norm{x-x'}\le C_1d(x,x')$ for all $x,x'\in\Delta$.
Write $\widehat F^n(x,z)=(F^nx,G_n(x,z))$.
  Set 
$\norm{v}_{\cB_\alpha(\Delta)}=\abs{v}_\infty+\abs{v}_{\cB_\alpha(\Delta)}$ where 
$\abs{v}_\infty=\sup_{x\in\Delta}\abs{v(x)}$ and
$\abs{v}_{\cB_\alpha(\Delta)}=\sup_{x,x'\in\Delta_0:x\neq x'}|v(x)-v(x')|/d(x,x')^\alpha$.
	Define $\cB_\alpha(\Delta)$ to be the Banach space of functions $v:\Delta\to\R$ with $\norm{v}_{\cB_\alpha(\Delta)}<\infty$.
	Similarly,
	define $\cB_\alpha(\widehat\Delta)$ to be the space of functions $v:\widehat\Delta\to\R$ with $\norm{v}_{\cB_\alpha(\widehat\Delta)}=\abs{v}_\infty+\abs{v}_{\cB_\alpha(\widehat\Delta)}<\infty$,
	where 
$\abs{v}_\infty=\sup_{x\in\widehat\Delta}\abs{v(x)}$ and
\[
	|v|_{\cB_\alpha(\widehat\Delta)}=\sup_{\stackrel{(x,z),(x',z')\in\widehat\Delta_0}{(x,z)\neq(x',z')}}\frac{|v(x,z)-v(x',z')|}{(d(x,x')+\norm{z-z'})^\alpha}.
\]
Note that $\norm{vw}_{\cB_\alpha(\Delta)}\le \norm{v}_{\cB_\alpha(\Delta)}\norm{w}_{\cB_\alpha(\Delta)}$ for all $v,w\in \cB_\alpha(\Delta)$ and similarly on $\widehat\Delta$.

If $d(x,x')=\|x-x'\|$, then $\cB_\alpha(\Delta)=\cC^\alpha(\Delta)$.
In this case we write $\abs{v}_\alpha=\abs{v}_{\cB_\alpha(\Delta)}$
and $\norm{v}_\alpha=\norm{v}_{\cB_\alpha(\Delta)}$.
In general 
$\cB_\alpha(\Delta)\supset\cC^\alpha(\Delta)$ and
similarly $\cB_\alpha(\widehat\Delta)\supset\cC^\alpha(\widehat\Delta)$.
Hence, the formulation allows for larger spaces of functions, including those which are Lipschitz with respect to a symbolic metric.

 A standard consequence of Definition~\ref{def:F} is the existence of a constant $C_J'$ such that
  \begin{equation}
  \label{eq:exp'}
  \sum_{h\in\cH_n}\norm{J_h}_\alpha \le C_J',
  \end{equation}
 for all $n\in \bN$.
We require in addition that there exists $n_0\ge1$ such that 
 \begin{align} \label{eq:star}
 \norm{G_{n_0}(x,z)-G_{n_0}(x,z')}\le \norm{z-z'},
 \end{align}
 for all $(x,z),(x,z')\in\widehat\Delta$.
Under the above assumptions we prove:

\begin{prop}
\label{prop:holder}
The disintegration ${\{\eta_{x}\}}_{x\in \Delta}$ is H\"older in the following  sense: there exists $C>0$ such that
for any $v\in \cB_\alpha(\widehat\Delta)$, the function $x\mapsto \bar{v}(x) :=  \eta_x(v)$ lies in $\cB_\alpha(\Delta)$ and
$\norm{\bar{v}}_{\cB_\alpha(\Delta)} \leq C \norm{v}_{\cB_\alpha(\widehat\Delta)}$. 
\end{prop}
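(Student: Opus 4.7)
The plan is to show that $\|\cL^n v_n\|_{\cB_\alpha(\Delta)}$ is bounded uniformly in $n$ by a constant times $\|v\|_{\cB_\alpha(\widehat\Delta)}$, and then pass to the limit supplied by Proposition~\ref{prop:etax}. Using the sum representation $(\cL^n u)(x) = \sum_{h\in\cH_n} J_h(x)\, u(h(x))$, together with $\sum_h J_h(x) = \cL^n 1(x) = 1$ (a.e., from $F$-invariance of $\nu$) and the identity $\widehat F^n(h(x),0) = (x, G_n(h(x),0))$, we rewrite
\[
(\cL^n v_n)(x) = \sum_{h \in \cH_n} J_h(x)\, v\bigl(x, G_n(h(x),0)\bigr),
\]
and the $L^\infty$ bound $|\bar v|_\infty \le |v|_\infty$ is immediate.

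For the seminorm I would split $(\cL^n v_n)(x) - (\cL^n v_n)(x') = A_n + B_n$, where
\[
A_n = \sum_{h} \bigl(J_h(x) - J_h(x')\bigr)\, v(x, G_n(h(x),0)),
\]
\[
B_n = \sum_{h} J_h(x')\bigl[v(x, G_n(h(x),0)) - v(x', G_n(h(x'),0))\bigr].
\]
The term $A_n$ is handled directly by \eqref{eq:exp'}, converted from $\|\cdot\|$ to $d$ via $\|x-x'\| \le C_1 d(x,x')$, yielding $|A_n| \le C|v|_\infty\, d(x,x')^\alpha$.

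The core of the argument is bounding $B_n$. The definition of $|\cdot|_{\cB_\alpha(\widehat\Delta)}$ gives
\[
|v(x,G_n(h(x),0)) - v(x',G_n(h(x'),0))| \le |v|_{\cB_\alpha(\widehat\Delta)}\bigl(d(x,x') + \|G_n(h(x),0) - G_n(h(x'),0)\|\bigr)^\alpha,
\]
so the crux is a uniform-in-$(n,h)$ \emph{fibre-drift} estimate of the form $\|G_n(h(x),0) - G_n(h(x'),0)\| \le C\,\|x-x'\|$. To prove this I would track the paired orbits $\widehat F^k(h(x),0)$ and $\widehat F^k(h(x'),0)$ for $0 \le k \le n$. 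Their base components are inverse-branch images of $x$ and $x'$ under $F^{n-k}$, so the base gap at time $k$ is at most $C_\lambda e^{-\lambda(n-k)}\|x-x'\|$. For the fibre gap $D_k$, I would compare in blocks of $n_0$ iterates by inserting an intermediate point whose base component matches that of the second orbit and whose fibre matches that of the first, then apply \eqref{eq:star} to the pure fibre difference and the Lipschitz dependence of $G_{n_0}$ on the base variable to the pure base difference. This yields the recursion $D_{k+n_0} \le D_k + L\, C_\lambda e^{-\lambda(n-k)}\|x-x'\|$ with $L = \Lip_x(G_{n_0})$. Iterating from $D_0 = 0$ over blocks of $n_0$ steps bounds $D_n$ by a geometric sum $\sum_{j\ge 0} e^{-\lambda j n_0}$ times $\|x-x'\|$; residual steps in a partial block of length $<n_0$ are absorbed into the Lipschitz constants of $G_j$ for $j<n_0$. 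This fibre-drift bound is the main technical obstacle.

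Combining it with $\sum_h J_h(x') = 1$ gives $|B_n| \le C'\,|v|_{\cB_\alpha(\widehat\Delta)}\,d(x,x')^\alpha$, uniformly in $n$. Adding the bounds on $A_n$ and $B_n$ and passing $n \to \infty$ via Proposition~\ref{prop:etax} yields $|\bar v(x) - \bar v(x')| \le C''\|v\|_{\cB_\alpha(\widehat\Delta)}\,d(x,x')^\alpha$ for almost every pair $(x,x')$, and since the estimate is stable at any pair of points where both limits exist, this gives $\bar v \in \cB_\alpha(\Delta)$ with the required norm bound.
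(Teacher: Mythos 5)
Your argument follows essentially the same route as the paper: the fibre-drift estimate $\|G_n(hx,0)-G_n(hx',0)\|\le C\,d(x,x')$, proved by the block recursion in steps of $n_0$ using \eqref{eq:star} for the pure fibre difference and the Lipschitz dependence of $G_{n_0}$ on the base for the pure base difference, is exactly Lemma~\ref{lem:D_uG} with exactly the paper's proof, and the rest (splitting the difference of $\cL^nv_n$ into a Jacobian-variation term and an observable-variation term, then passing to the pointwise limit) matches Corollary~\ref{cor:Gn} and Lemma~\ref{lem:L}. The one slip is your formula for the transfer operator: since $\nu$ is only assumed absolutely continuous, $\cL$ is defined by duality with respect to $\nu$, so $\cL^n v_n=\varphi^{-1}\sum_{h\in\cH_n}J_h\,(\varphi v_n)\circ h$ where $\varphi=d\nu/d{\rm Leb}$; in particular $\sum_h J_h(x)$ is \emph{not} $\cL^n1(x)$ and need not equal $1$ (what equals $1$ is $\varphi^{-1}\sum_h J_h\,\varphi\circ h$). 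The claim ``$\sum_h J_h=1$ from $F$-invariance of $\nu$'' is therefore false in general, and without the $\varphi$ factors the limit you compute is not $\eta_x(v)$. The repair is routine: the distortion bound \eqref{eq:exp} implies $\varphi$ is $\cC^\alpha$ and bounded below, so the factors $\varphi\circ h/\varphi$ are uniformly bounded in $\cB_\alpha(\Delta)$ (using the contraction of $h$ for $\varphi\circ h$), and your $A_n$ and $B_n$ estimates go through with $J_h$ replaced by $(\varphi\circ h/\varphi)J_h$, still controlled by \eqref{eq:exp'}. This is precisely how the paper organizes the proof.
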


\noindent For a bounded variation version of this result, see~\cite[Lemma A.7]{GP}.

To prove Proposition~\ref{prop:holder}, we require the following lemma.
  
 \begin{lemma}
 \label{lem:D_uG}
 There exists $C>0$ such that,
 for all $h\in \cH_n$, $n\in\bN$, $(x,z),\,(x',z)\in \widehat\Delta_0$,
 \begin{align*}
	 & \norm{G_n(hx,z)-G_n(hx',z)} \leq Cd(x,x'). 
 \end{align*}
 \end{lemma}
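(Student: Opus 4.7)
My plan is to use a telescoping argument that reduces the base-Lipschitz estimate on $G_n$ to a summable series of pure fibre perturbations. For $0 \le j \le n$, define the hybrid
\[
w_j = G_{n-j}(F^j hx, G_j(hx', z)),
\]
so that $w_0 = G_n(hx, z)$ and $w_n = G_n(hx', z)$, using the semigroup identity $G_m(y,z) = G_{m-k}(F^k y, G_k(y,z))$ in the fibre together with $G_0(y,z) = z$. Telescoping and then applying $G_{n-j}(y,z) = G_{n-j-1}(Fy, G(y,z))$ inside $w_j$, each difference $w_j - w_{j+1}$ becomes a pure fibre difference of $G_{n-j-1}(F^{j+1}hx, \cdot)$ evaluated at $G(F^j hx, G_j(hx', z))$ versus $G(F^j hx', G_j(hx', z))$, since the two outer base arguments inside $G_{n-j-1}$ agree.

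The first ingredient is a uniform fibre-Lipschitz bound: there exists $K \ge 1$, independent of $m$, such that
\[
\|G_m(y,z) - G_m(y,z')\| \le K \|z - z'\|
\]
for all $m \ge 0$ and $y \in \Delta$. To obtain this, write $m = k n_0 + r$ with $0 \le r < n_0$, decompose $G_m(y, \cdot) = G_r(F^{k n_0} y, G_{k n_0}(y, \cdot))$ in the fibre, use Lipschitzness of $G_r$ (with constant $L^r$) for the trailing steps, and iterate \eqref{eq:star} $k$ times to obtain non-expansion over the leading $k n_0$-block. This gives $K \le \max(1, L^{n_0 - 1})$, where $L$ is the joint Lipschitz constant of $G$.

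The second ingredient is the standard base contraction: since $F^j h \in \cH_{n-j}$, integrating the derivative bound \eqref{eq:exp} along a path in $\Delta_0$ gives $\|F^j hx - F^j hx'\| \le C_\lambda e^{-\lambda(n-j)} \|x - x'\|$. Combining this with the base-Lipschitzness of $G$ and the uniform fibre bound $K$,
\[
\|w_j - w_{j+1}\| \le K \|G(F^j hx, G_j(hx', z)) - G(F^j hx', G_j(hx', z))\| \le K L C_\lambda e^{-\lambda(n-j)} \|x - x'\|.
\]
Summing the geometric series $\sum_{j=0}^{n-1} e^{-\lambda(n-j)}$ and using $\|x - x'\| \le C_1 d(x, x')$ then yields the lemma with $C = K L C_\lambda C_1 / (e^\lambda - 1)$.

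The step I expect to be the main obstacle is the uniform fibre-Lipschitz bound on $G_m$: a naive iteration of $G$'s Lipschitz constant grows like $L^m$, which is catastrophic when $L > e^\lambda$ and would completely overwhelm the base contraction. The hypothesis \eqref{eq:star} is precisely what breaks this potential exponential blow-up, providing a non-expansion every $n_0$ steps and leaving only at most $n_0 - 1$ trailing Lipschitz steps of bounded overhead.
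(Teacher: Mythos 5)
Your proof is correct and follows essentially the same route as the paper's: both telescope through the cocycle structure $G_n(y,z)=G_{n-k}(F^ky,G_k(y,z))$, use \eqref{eq:star} to keep the fibre-Lipschitz constants of the outer compositions uniformly bounded, and sum the geometric series produced by the contraction \eqref{eq:exp} of inverse branches together with $\norm{x-x'}\le C_1 d(x,x')$. The only difference is bookkeeping: the paper runs the recursion in blocks of $n_0$ from the outside (peeling off $G_{n_0}$ at each stage, giving $A_m \le \Lip G_{n_0}\, C_1C_\lambda e^{-\lambda(n-m+n_0)}d(x,x') + A_{m-n_0}$), whereas you telescope one step at a time from the inside and package the $n_0$-block non-expansion into a separate uniform fibre-Lipschitz constant $K$ --- the resulting estimates are identical.
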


 \begin{proof}
Fix $n\in \bN$, $h\in \cH_n$. 
Let $n_0$ be as in~\eqref{eq:star}.
Since $G_m(x,z) = G_{n_0}(F^{m-n_0}x,G_{m-n_0}(x,z))$
for any $n_0\le m \leq n$, we have
$G_m(hx,z)=G_{n_0}(\ell x,G_{m-n_0}(hx,z))$
 where $\ell := F^{m-n_0}\circ h \in \cH_{n-m+n_0}$.
Hence
 \begin{multline*}
	 \|G_m(hx,z)-G_m(hx',z)\|
	 \\
  \le 
	 \|G_{n_0}(\ell x,G_{m-n_0}(hx,z))- G_{n_0}(\ell x',G_{m-n_0}(hx,z))\|
	 \\
 \quad +
	 \|G_{n_0}(\ell x',G_{m-n_0}(hx,z))- G_{n_0}(\ell x',G_{m-n_0}(hx',z))\|.
 \end{multline*}
 Using the estimates \eqref{eq:exp}, \eqref{eq:star} and the
 assumption on $d$,
 \begin{align*}
	 A_m  &\leq 
  \Lip\, G_{n_0} \|\ell x-\ell x'\|
  + A_{m-n_0} \\
  & \le \Lip\, G_{n_0} C_1C_{\lambda} e^{-\lambda(n-m+n_0)}d(x,x')
  + A_{m-n_0},
  \end{align*}
 where $A_m=\norm{G_m(hx,z)-G_m(hx',z)}$.
 Write $n=kn_0+r$ where $k\in\bN$ and $0\le r\le n_0-1$
 and set $m=jn_0+r$ where $j\le k$.  Then
 \[
	 A_{jn_0+r}
  \leq 
  \Lip\, G_{n_0} C_1C_{\lambda} e^{-\lambda(k-j+1)n_0}d(x,x')
  +A_{(j-1)n_0+r}.
 \]
 Consequently, iterating the above estimate, we obtain
\[
\begin{aligned}
	A_{kn_0+r}
  &\leq 
  \Lip\, G_{n_0}C_1C_{\lambda} \sum_{j=1}^n e^{-\lambda jn_0}d(x,x')+A_r\\
  &\leq \Lip\, G_{n_0}C_1C_{\lambda} (e^{\lambda n_0}-1)^{-1}d(x,x')+A_r.
\end{aligned}
  \]
The result follows since $\max_{r<n_0}A_r\le C_1C_\lambda \max_{r<n_0}\Lip\,G_r
\,d(x,x')\le Cd(x,x')$.
\end{proof}

  Recall that $\bar v=\lim_{n\to\infty}\cL^nv_n$ where $v_n(x)=v\circ\widehat F^n(x,0)$.
  Since $F$ is uniformly expanding, 
	  $\cL^nv_n=\varphi^{-1}\sum_{h\in\cH_n}J_h \ (\varphi v_n)\circ h$
where the density $\varphi$ corresponding to $\nu$ is 
$\cC^\alpha$ and bounded below.

 \begin{cor} \label{cor:Gn}
	 There exists $C>0$ such that
	 $\abs{(\varphi v_n)\circ h}_{\cB_\alpha(\Delta)}\le C\norm{v}_{\cB_\alpha(\widehat\Delta)}$,
	 for all $v\in\cB_\alpha(\widehat\Delta)$, $h\in\cH_n$, $n\in\bN$.
 \end{cor}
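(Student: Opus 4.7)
The plan is to reduce the estimate to a combination of three ingredients: the crucial identity $F^n\circ h=\mathrm{id}$ on $\Delta_0$, the product rule in $\cB_\alpha(\Delta)$, and the $n$-uniform Lipschitz estimate for $G_n\circ h$ provided by Lemma~\ref{lem:D_uG}.

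First I would rewrite $(v_n\circ h)(x)$ in a form where the expansive dynamics has disappeared. Since $h\in\cH_n$ is an inverse branch of $F^n$,
\[
(v_n\circ h)(x)=v\bigl(F^n(hx),G_n(hx,0)\bigr)=v\bigl(x,G_n(hx,0)\bigr)
\]
for $x\in\Delta_0$. The point is that the factor $F^n(hx)$ collapses to $x$, so the only $n$-dependence left sits inside $G_n(hx,0)$.

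Next I would estimate $|v_n\circ h|_{\cB_\alpha(\Delta)}$ directly. For $x,x'\in\Delta_0$,
\[
\bigl|(v_n\circ h)(x)-(v_n\circ h)(x')\bigr|
\le |v|_{\cB_\alpha(\widehat\Delta)}\bigl(d(x,x')+\|G_n(hx,0)-G_n(hx',0)\|\bigr)^\alpha.
\]
Applying Lemma~\ref{lem:D_uG} with $z=0$ gives $\|G_n(hx,0)-G_n(hx',0)\|\le Cd(x,x')$ with $C$ independent of $n$ and $h$. Hence
\[
|v_n\circ h|_{\cB_\alpha(\Delta)}\le (1+C)^\alpha\,|v|_{\cB_\alpha(\widehat\Delta)},
\]
and combined with the obvious bound $|v_n\circ h|_\infty\le|v|_\infty$ we obtain a uniform estimate $\|v_n\circ h\|_{\cB_\alpha(\Delta)}\le C_1\|v\|_{\cB_\alpha(\widehat\Delta)}$.

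Finally, since $\varphi$ is $\cC^\alpha$ and bounded, it lies in $\cB_\alpha(\Delta)$ (using the inclusion $\cC^\alpha(\Delta)\subset\cB_\alpha(\Delta)$ and the bound $\|x-x'\|\le C_1d(x,x')$), so the product rule $\|\varphi\,w\|_{\cB_\alpha(\Delta)}\le\|\varphi\|_{\cB_\alpha(\Delta)}\|w\|_{\cB_\alpha(\Delta)}$ yields
\[
|(\varphi v_n)\circ h|_{\cB_\alpha(\Delta)}\le\|(\varphi v_n)\circ h\|_{\cB_\alpha(\Delta)}\le\|\varphi\|_{\cB_\alpha(\Delta)}\,C_1\,\|v\|_{\cB_\alpha(\widehat\Delta)},
\]
which is the desired inequality. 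The only place where real work happens is the appeal to Lemma~\ref{lem:D_uG}, and that lemma is essentially the entire content of the corollary; everything else is bookkeeping about norms and the trivial identity $F^n\circ h=\mathrm{id}$.
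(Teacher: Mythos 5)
Your argument follows the paper's proof essentially line for line: the identity $v_n\circ h(x)=v(x,G_n(hx,0))$, the H\"older estimate in terms of $d(x,x')+\norm{G_n(hx,0)-G_n(hx',0)}$, and the appeal to Lemma~\ref{lem:D_uG} with $z=0$ are exactly the steps used there. The one place your write-up goes astray is the final step: the function to be estimated is $(\varphi v_n)\circ h=(\varphi\circ h)\cdot(v_n\circ h)$, so the product rule yields
\[
\norm{(\varphi v_n)\circ h}_{\cB_\alpha(\Delta)}\le\norm{\varphi\circ h}_{\cB_\alpha(\Delta)}\,\norm{v_n\circ h}_{\cB_\alpha(\Delta)},
\]
with the factor $\norm{\varphi\circ h}_{\cB_\alpha(\Delta)}$ rather than $\norm{\varphi}_{\cB_\alpha(\Delta)}$ as you wrote; the inequality $\norm{\varphi w}\le\norm{\varphi}\norm{w}$ you invoke bounds $\varphi\cdot(v_n\circ h)$, which is a different function. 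The missing (easy) step is a uniform bound on $\norm{\varphi\circ h}_{\cB_\alpha(\Delta)}$ over $h\in\cH_n$, $n\in\bN$: since $\abs{Dh}\le C_\lambda e^{-\lambda n}$ by~\eqref{eq:exp} and $\norm{x-x'}\le C_1d(x,x')$, one has $\abs{\varphi\circ h(x)-\varphi\circ h(x')}\le C_1^\alpha C_\lambda^\alpha\abs{\varphi}_\alpha\,d(x,x')^\alpha$, whence $\norm{\varphi\circ h}_{\cB_\alpha(\Delta)}\le\abs{\varphi}_\infty+C_1^\alpha C_\lambda^\alpha\abs{\varphi}_\alpha$. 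This is precisely the computation the paper carries out; with it inserted, your proof is complete and coincides with the paper's.
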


 \begin{proof}  Let $x,x'\in\Delta_0$.
Since $\widehat F^n(hx,0)=(x,G_n(hx,0))$, 
we have that $v_n\circ h(x)=v(x,G_n(hx,0))$.  Hence
\[
|v_n\circ h(x)-v_n\circ h(x')|\le|v|_{\cB_\alpha(\widehat\Delta)}(d(x,x')+\|G_n(hx,0)-G_n(hx',0)\|)^\alpha.
\]
	  Hence by Lemma~\ref{lem:D_uG}, there is a constant $C\ge1$ such that
\[
|v_n\circ h(x)-v_n\circ h(x')|\le C\abs{v}_{\cB_\alpha(\widehat\Delta)}d(x,x')^\alpha.
\]
Clearly $\abs{v_n\circ h}_\infty\le \abs{v}_\infty$,
so $\norm{v_n\circ h}_{\cB_\alpha(\Delta)}\le C\norm{v}_{\cB_\alpha(\widehat\Delta)}$.

	 Also, $\abs{\varphi\circ h}_\infty\le \abs{\varphi}_\infty<\infty$ and
	 \begin{align*}
		 |\varphi\circ h(x)-\varphi\circ h(x')|
& \le |\varphi|_\alpha\|hx-hx'\|^\alpha
		 \le \abs{\varphi}_\alpha \sup_{\xi\in\Delta_0}\abs{Dh(\xi)}^\alpha\norm{x-x'}^\alpha \\ &
		 \le C_1^\alpha C_\lambda^\alpha \abs{\varphi}_\alpha d(x,x')^\alpha,
	 \end{align*}
 so that $\norm{\varphi\circ h}_{\cB_\alpha(\Delta)}\le C$.
	 Finally,
	 \begin{align*}
		 \norm{(\varphi v_n)\circ h}_{\cB_\alpha(\Delta)} 
		  & \le \norm{\varphi\circ h}_{\cB_\alpha(\Delta)}\norm{v_n\circ h}_{\cB_\alpha(\Delta)}  
 \le C\norm{\varphi\circ h}_{\cB_\alpha(\Delta)}\norm{v}_{\cB_\alpha(\widehat\Delta)}
	 \end{align*}
	 as required.
  \end{proof}

	  \begin{lemma} \label{lem:L}
		  There exists $C>0$ such that
		  $\norm{\cL^nv_n}_{\cB_\alpha(\Delta)}\le C\norm{v}_{\cB_\alpha(\widehat\Delta)}$, for all $v\in\cB_\alpha(\widehat\Delta)$, $n\in\bN$.
	  \end{lemma}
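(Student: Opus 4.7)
The plan is to apply the explicit formula $\cL^n v_n=\varphi^{-1}\sum_{h\in\cH_n}J_h\,(\varphi v_n)\circ h$ recorded just before Corollary~\ref{cor:Gn}, and then combine the Banach-algebra property of $\cB_\alpha(\Delta)$ (the submultiplicativity $\norm{vw}_{\cB_\alpha(\Delta)}\le \norm{v}_{\cB_\alpha(\Delta)}\norm{w}_{\cB_\alpha(\Delta)}$ noted after the definition of $\cB_\alpha$) with the per-branch estimate from Corollary~\ref{cor:Gn} and the summability bound~\eqref{eq:exp'}. The analytic work is already done in Lemma~\ref{lem:D_uG} and Corollary~\ref{cor:Gn}, so the present lemma is merely an assembly step.

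Concretely, I first observe that $\varphi^{-1}\in\cB_\alpha(\Delta)$ with norm independent of $n$ and $v$, since $\varphi$ is $\cC^\alpha$ and bounded below. Applying submultiplicativity and the triangle inequality for the sum over inverse branches, I obtain
\[
\norm{\cL^n v_n}_{\cB_\alpha(\Delta)}\le \norm{\varphi^{-1}}_{\cB_\alpha(\Delta)}\sum_{h\in\cH_n}\norm{J_h}_{\cB_\alpha(\Delta)}\,\norm{(\varphi v_n)\circ h}_{\cB_\alpha(\Delta)}.
\]
Corollary~\ref{cor:Gn} bounds each factor $\norm{(\varphi v_n)\circ h}_{\cB_\alpha(\Delta)}$ uniformly by a constant times $\norm{v}_{\cB_\alpha(\widehat\Delta)}$, so the only remaining task is to control $\sum_{h\in\cH_n}\norm{J_h}_{\cB_\alpha(\Delta)}$ uniformly in $n$.

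For this, I compare the two metrics: since $\|x-x'\|\le C_1 d(x,x')$, the $\cC^\alpha$ seminorm dominates the $\cB_\alpha$ seminorm up to a factor $C_1^\alpha$, so
\[
\sum_{h\in\cH_n}\norm{J_h}_{\cB_\alpha(\Delta)}\le (1\vee C_1^\alpha)\sum_{h\in\cH_n}\norm{J_h}_\alpha\le (1\vee C_1^\alpha)C_J'
\]
by~\eqref{eq:exp'}, uniformly in $n$. Combining the three estimates yields $\norm{\cL^n v_n}_{\cB_\alpha(\Delta)}\le C\norm{v}_{\cB_\alpha(\widehat\Delta)}$ with $C$ independent of $n$. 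No genuine obstacle is anticipated; the only minor care is the metric comparison between $\|\cdot\|$ and $d$, which is immediate from the hypothesis on $d$.
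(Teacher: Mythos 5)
Your proposal is correct and follows essentially the same route as the paper: apply the explicit formula for $\cL^n v_n$, use submultiplicativity of $\norm{\cdot}_{\cB_\alpha(\Delta)}$ together with the per-branch bound from Corollary~\ref{cor:Gn}, control $\sum_{h\in\cH_n}\norm{J_h}_{\cB_\alpha(\Delta)}$ via the metric comparison $\norm{x-x'}\le C_1 d(x,x')$ and the summability estimate~\eqref{eq:exp'}, and finish by multiplying by $\norm{\varphi^{-1}}_{\cB_\alpha(\Delta)}$. The only cosmetic difference is your factor $(1\vee C_1^\alpha)$ in the metric comparison, which is if anything slightly more careful than the paper's $C_1^\alpha$.
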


	  \begin{proof}
		  It follows from the assumption on the metric $d$ that
$\norm{J_h}_{\cB_\alpha(\Delta)}\le C_1^\alpha\norm{J_h}_\alpha$.
	  Hence by Corollary~\ref{cor:Gn},
	  \[
\norm{J_h \ (\varphi v_n)\circ h}_{\cB_\alpha(\Delta)}
\le \norm{J_h}_{\cB_\alpha(\Delta)}
\norm{(\varphi v_n)\circ h}_{\cB_\alpha(\Delta)}
\le C\norm{J_h}_\alpha\norm{v}_{\cB_\alpha(\widehat\Delta)}.
\]
By estimate~\eqref{eq:exp'},
\[
	\Bigl\|
	\sum_{h\in \cH_n}  J_h\ (\varphi v_n)\circ h \Bigr\|_{\cB_\alpha(\Delta)}
	\le 
	C\|v\|_{\cB_\alpha(\widehat\Delta)}.
	\]
	Finally,
	\[
		\norm{\cL^nv_n}_{\cB_\alpha(\Delta)} \le 
		\norm{\varphi^{-1}}_{\cB_\alpha(\Delta)} \Bigl\|\sum_{h\in \cH_n}  J_h \ (\varphi v_n)\circ h\Bigr\|_{\cB_\alpha(\Delta)} 
		\le C\norm{\varphi^{-1}}_{\cB_\alpha(\Delta)} \norm{v}_{\cB_\alpha(\Delta)} 
\]
as required.
  \end{proof}

  It is elementary that if $f_n:\Delta\to\R$ is a sequence of H\"older functions with $\sup_n|f_n|_{\cB_\alpha(\Delta)}<\infty$ and $f_n\to f$ pointwise, 
  then $f\in\cB_\alpha(\Delta)$ and $\|f\|_{\cB_\alpha(\Delta)} \le \sup_n \|f_n\|_{\cB_\alpha(\Delta)}$.
  Hence
  Proposition~\ref{prop:holder} follows from Lemma~\ref{lem:L} by setting
  $f_n=\cL^nv_n$ and $f=\bar v$.

\subsection*{H\"older disintegration for suspensions}
The following generalization to suspensions turns out to be useful in~\cite{AMV14}.
Let $R:\widehat\Delta\to \R^+$ be a measurable roof function that is constant along stable leaves, so that $R:\Delta\to\R^+$ is well-defined.
Form the suspensions 
$\Delta^R=\{(x,u)\in\Delta\times\R:0\le u\le R(x)\}$ and
$\widehat\Delta^R=\{(x,z,u)\in\widehat\Delta\times\R:0\le u\le R(x)\}$.

For $v:\Delta^R\to\R$, we define $\|v\|_{\cB_\alpha(\Delta^R)}=
|v|_\infty+|v|_{\cB_\alpha(\Delta^R)}$ where
\[
|v|_{\cB_\alpha(\Delta^R)}=\sup_{\stackrel{(x,u),(x',u)\in\Delta_0^R}{x\neq x'}}\frac{|v(x,u)-v(x',u)|}{d(x,x')^\alpha}.
\]
Similarly, for $v:\widehat\Delta^R\to\R$, we define $\|v\|_{\cB_\alpha(\widehat\Delta^R)}=
|v|_\infty+|v|_{\cB_\alpha(\widehat\Delta^R)}$ where
\[
	|v|_{\cB_\alpha(\widehat\Delta^R)}=\sup_{\stackrel{(x,z,u),(x',z',u)\in\widehat\Delta_0^R}{(x,z)\neq(x',z')}}\frac{|v(x,z,u)-v(x',z',u)|}{(d(x,x')+\norm{z-z'})^\alpha}.
\]
Let $\cB_\alpha(\Delta^R)$ 
and $\cB_\alpha(\widehat\Delta^R)$ denote the corresponding spaces of continuous observables
for which $\|v\|_{\cB_\alpha(\Delta^R)}$ and
$\|v\|_{\cB_\alpha(\widehat\Delta^R)}$ respectively are finite.

Suppose that $v:\widehat\Delta^R\to\R$.  Write $v^u(x,z)=v(x,z,u)$ and note that for fixed $u\ge0$, the function $v^u$ is defined on the set 
$\bigcup_{(x,u)\in\Delta^R}\pi^{-1}(x)$.
Hence we can define $\eta_x(v^u)$ whenever $(x,u)\in\Delta^R$.
In this way, we obtain a function $\bar v:\Delta^R\to\R$ given by
\[
	\bar v(x,u)=\eta_x(v^u).
\]

\begin{prop}
\label{prop:holderflow}
There exists $C>0$ such that
for any $v\in \cB_\alpha(\widehat\Delta^R)$, the function $(x,u)\mapsto \bar{v}(x,u) =  \eta_x(v^u)$ lies in $\cB_\alpha(\Delta^R)$ and
$\norm{\bar{v}}_{\cB_\alpha(\Delta^R)} \leq C \norm{v}_{\cB_\alpha(\widehat\Delta^R)}$. 
\end{prop}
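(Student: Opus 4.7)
The plan is to reduce Proposition~\ref{prop:holderflow} to Proposition~\ref{prop:holder} by treating each horizontal slice $v^u$ separately. The crucial observation is that the seminorm $|\cdot|_{\cB_\alpha(\Delta^R)}$ only compares points $(x,u)$ and $(x',u)$ at the \emph{same} height $u$, so a uniform-in-$u$ H\"older estimate for $x\mapsto \bar v(x,u)=\eta_x(v^u)$ is all that is needed.

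For each fixed $u\ge 0$, the slice $v^u:(x,z)\mapsto v(x,z,u)$ is defined on $\pi^{-1}(\Delta_u)$, where $\Delta_u:=\{x\in\Delta:R(x)\ge u\}$. By definition of $\|v\|_{\cB_\alpha(\widehat\Delta^R)}$,
\[
|v^u(x,z)-v^u(x',z')|\le |v|_{\cB_\alpha(\widehat\Delta^R)}(d(x,x')+\norm{z-z'})^\alpha,\qquad |v^u|_\infty\le |v|_\infty,
\]
both bounds being uniform in $u$. I would then extend $v^u$ to a function $\tilde v^u$ on all of $\widehat\Delta$ with the same H\"older and sup norms. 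This is possible because $(d(x,x')+\norm{z-z'})^\alpha$ is a genuine metric for $\alpha\in(0,1]$ (by subadditivity of $t\mapsto t^\alpha$), so $v^u$ is Lipschitz in this snowflake metric, and the standard McShane formula produces a Lipschitz extension with the same Lipschitz constant; truncating to $[-|v|_\infty,|v|_\infty]$ preserves the bound. Hence $\|\tilde v^u\|_{\cB_\alpha(\widehat\Delta)}\le C\|v\|_{\cB_\alpha(\widehat\Delta^R)}$ with $C$ independent of $u$.

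Applying Proposition~\ref{prop:holder} to $\tilde v^u$ then gives $\|x\mapsto \eta_x(\tilde v^u)\|_{\cB_\alpha(\Delta)}\le C'\|v\|_{\cB_\alpha(\widehat\Delta^R)}$, uniformly in $u$. Since $\eta_x$ is supported on $\pi^{-1}(x)$ and $\tilde v^u$ agrees with $v^u$ on $\pi^{-1}(x)$ whenever $R(x)\ge u$, we have $\eta_x(\tilde v^u)=\bar v(x,u)$ for every $(x,u)\in\Delta^R$. Combined with the trivial bound $|\bar v(x,u)|\le|v|_\infty$, this yields $\|\bar v\|_{\cB_\alpha(\Delta^R)}\le C\|v\|_{\cB_\alpha(\widehat\Delta^R)}$. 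The only delicate point is the extension step, which dissolves once one notices the snowflake-metric trick; alternatively, one can repeat the proof of Proposition~\ref{prop:holder} verbatim with $v^u_n(y):=v(F^n y,G_n(y,0),u)$ (defined for $y\in F^{-n}(\Delta_u)$), observing that every estimate in Lemma~\ref{lem:D_uG}, Corollary~\ref{cor:Gn}, and Lemma~\ref{lem:L} depends on $v$ only through $\|v\|_{\cB_\alpha(\widehat\Delta^R)}$ and is therefore uniform in $u$.
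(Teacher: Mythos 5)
Your argument is correct, and your primary route differs from the paper's in an instructive way. The paper also begins by extending the slice $v^u$ to all of $\widehat\Delta$, but it takes an \emph{arbitrary} continuous extension $w$ (no norm control), writes out $\cL^n w_n=\varphi^{-1}\sum_{h\in\cH_n}J_h\,(\varphi w_n)\circ h$, and observes that on $\Delta^R$ the resulting operator $M_n v$ involves $v$ only through the values $v(x,G_n(hx,0),u)$ with $(x,u)\in\Delta^R$; it then re-runs the estimates of Lemma~\ref{lem:D_uG}, Corollary~\ref{cor:Gn} and Lemma~\ref{lem:L} to bound $\|M_nv\|_{\cB_\alpha(\Delta^R)}$ intrinsically by $\|v\|_{\cB_\alpha(\widehat\Delta^R)}$. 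That is exactly your ``alternatively'' fallback. Your main route instead produces a \emph{norm-preserving} extension $\tilde v^u$ via McShane in the snowflake metric $(d(x,x')+\|z-z'\|)^\alpha$ and then invokes Proposition~\ref{prop:holder} as a black box; this is slicker and correctly exploits the fact that the suspension seminorms only compare points at equal height $u$. The one point to check in your version: Proposition~\ref{prop:etax} defines $\eta_x$ on $\cC^0(\widehat\Delta)$, so the extension must be continuous for the ambient topology of $\widehat\Delta$, not merely $d$-H\"older. Since the standing hypothesis is only $\|x-x'\|\le C_1 d(x,x')$ (with $d$ possibly a symbolic metric), a $d^\alpha$-Lipschitz McShane extension need not be continuous in the manifold topology, so for general $d$ you should either verify continuity of the extension or combine McShane with a Tietze-type smoothing; when $d=\|\cdot\|$ this is automatic. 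This caveat is precisely what the paper's intrinsic rewriting sidesteps, at the cost of repeating the estimates; both arguments deliver the same constant-tracking and the same conclusion.
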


\begin{proof}
	This is proved in the same way as Proposition~\ref{prop:holder},
	but care needs to be taken with the notation since $v^u$ is not well-defined on the whole of $\hat\Delta$.

For fixed $u$, choose a continuous extension $w:\hat\Delta\to\R$ of $v^u$.
Then for $(x,u)\in\Delta^R$, we have
\[
\bar v(x,u)=\eta_x(w)=
\lim_{n\to\infty}(\cL^nw_n)(x), \quad w_n(x)=w\circ\widehat F^n(x,0).
\]

But
$\cL^nw_n =\varphi^{-1}\sum_{h\in\cH_n}J_h\,(\varphi w_n)\circ h$,
and
$w_n\circ h(x)=w\circ \hat F^n(hx,0)=w(x,G_n(hx,0))=
v^u(x,G_n(hx,0))= v(x,G_n(hx,0),u)$.
Hence for $(x,u)\in\Delta^R$, we have shown that
\[
	\bar v(x,u)=\lim_{n\to\infty}(M_nv)(x,u), \quad
	M_nv=	\tilde\varphi^{-1}\sum_{h\in\cH_n}\tilde J_h\,\tilde\varphi \circ\tilde h\, \tilde v_n,
\]
where 
\[
	\tilde h(x,u)=(hx,u), \quad
	\tilde J_h(x,u)=J_h(x), \quad
	\tilde\varphi(x,u)=\varphi(x), \quad
	\tilde v_n(x,u)=v(x,G_n(hx,0),u).
\]
It now suffices to prove that
${\|M_nv\|}_{\cB_\alpha(\Delta^R)}\le C\|v\|_{\cB_\alpha(\widehat\Delta^R)}$.

The main steps can now be sketched as follows.
	Picking up at the beginning of the proof of Corollary~\ref{cor:Gn},
for $(x,u),\,(x',u)\in \Delta_0^R$,
	\begin{align*}
		|\tilde v_n(x,u)-\tilde v_n(x',u)| & 
\le|v|_{\cB_\alpha(\widehat\Delta^R)}(d(x,x')+\|G_n(hx,0)-G_n(hx',0)\|)^\alpha
\\  &\le C|v|_{\cB_\alpha(\widehat\Delta^R)}d(x,x')^\alpha,
\end{align*}
and we deduce that $\|\tilde v_n\|_{\cB_\alpha(\Delta^R)}
\le C{\|v\|}_{\cB_\alpha(\widehat\Delta^R)}$.

Next, it follows as before that 
${\|\tilde\varphi\circ\tilde h\|}_{\cB_\alpha(\Delta^R)}\le C$ so that
$\|(\tilde\varphi \tilde v_n)\circ\tilde h\|_{\cB_\alpha(\Delta^R)}
\le C\|v\|_{\cB_\alpha(\widehat\Delta^R)}$.

Turning to
Lemma~\ref{lem:L},
the estimate ${\|\tilde J_h\|}_{\cB_\alpha(\Delta^R)}\le C{\|J_h\|}_\alpha$
holds just as before, leading to the desired estimate
${\|M_nv\|}_{\cB_\alpha(\Delta^R)}\le C{\|v\|}_{\cB_\alpha(\widehat\Delta^R)}$.
\end{proof}

\section{$\cC^{1}$ regularity}
\label{sec:smooth}

As in the previous section, we assume the set up in Sections~\ref{sec:eta} and~\ref{sec:disint}. Now we require yet more regularity for the system; namely that $N$ is a compact manifold possibly with boundary, that $G:\Delta\times N\to N$ is $\cC^{1}$ and that
$F:\Delta\to\Delta$ is a $\cC^{2}$ uniformly expanding map (as in Definition~\ref{def:F} but with $C^{1+\alpha}$ changed to $C^2$) with absolutely continuous invariant probability measure $\nu$.
As before $\cH_n$ denotes the set of inverse branches of $F^n$, each defined on the open and dense subset $\Delta_0$ of $\Delta$,
and $J_h := |\det(Dh)|$, and we require that there exists $C_\lambda>0$, $\lambda>0$ such that
  \begin{equation}
  \label{eq:Ch}
  \sup_{x\in\Delta_0}\abs{Dh(x)}  \leq C_{\lambda}e^{-\lambda n},
  \end{equation}
 for all $h \in \cH_n, n\in \bN$.
Let $\widehat\Delta_0=\Delta_0\times N$.

 In the following we use the notation $Dv = (D_{u}v, D_{s}v)$ and $DG = (D_uG,D_sG)$. 
We require in addition the following uniform exponential contraction in the stable direction:
 the constants $C_\lambda>0$, $\lambda>0$ can be chosen so that also
  \begin{equation}
  \label{eq:CG}
  \norm{D_{s}G_n(x,z)} \leq C_\lambda e^{-\lambda n}, 
  \end{equation}
 for all $n\in \bN$, $(x,z) \in \widehat\Delta_0$.

Define $\cC^1(\Delta)$ to be the space of continuous functions $v:\Delta\to\R$ that are continuously differentiable on $\Delta_0$ with bounded derivative.  This is a Banach space under the norm 
$\norm{v}_{\cC^1}=\sup_{x\in\Delta}|v(x)|+
\sup_{x\in\Delta_0}|Dv(x)|$.
The space $\cC^1(\widehat\Delta)$ is defined similarly.

Under these assumptions we prove:
\begin{prop}
\label{prop:etaxC1}
The disintegration ${\{\eta_{x}\}}_{x\in \Delta}$ is smooth in the following  sense:
there exists $C>0$ such that, 
for any $v\in \cC^1(\widehat \Delta)$, the function $x\mapsto \bar{v}(x) :=  \eta_x(v)$ lies in $\cC^{1}(\Delta)$ and
\[
	\norm{D\bar{v}(x)} \leq C \sup_{z\in N}\abs{v(x,z)}+ 
	C \sup_{z\in N}\norm{Dv(x,z)},
\]
for all $x\in\Delta_0$.
\end{prop}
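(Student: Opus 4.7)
The plan is to mirror the proof of Proposition~\ref{prop:holder}: prove a $\cC^1$ analogue of Lemma~\ref{lem:D_uG}, then a $\cC^1$ analogue of Lemma~\ref{lem:L}, and finally pass to the limit. Writing $\psi_n(x) = G_n(hx, 0)$ for $h \in \cH_n$, the chain rule applied to the recursion $G_{k+1}(y, z) = G(F^k y, G_k(y, z))$ gives
\[
D\psi_{k+1}(x) = D_u G(F^k hx, \psi_k)\, D(F^k h)(x) + D_s G(F^k hx, \psi_k)\, D\psi_k(x).
\]
Iterating, one identifies $\prod_{j=k+1}^{n-1} D_s G(F^j hx, \psi_j)$ with $D_z G_{n-1-k}$ evaluated at an appropriate point via the cocycle property of $G_k$; this product has norm $O(e^{-\lambda(n-1-k)})$ by~\eqref{eq:CG}, while $|D(F^k h)(x)| = O(e^{-\lambda(n-k)})$ by~\eqref{eq:Ch} since $F^k h \in \cH_{n-k}$. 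Summing the resulting geometric series gives $|D\psi_n(x)| \le C$, uniformly in $n, h, x$.

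Next I would differentiate $\cL^n v_n = \varphi^{-1} \sum_h J_h\,(\varphi v_n)\circ h$, using $v_n \circ h(x) = v(x, \psi_n(x))$. Combining the previous step with uniform bounds on $\varphi, \varphi^{-1}, D\varphi$, the fact that $\sum_h J_h(x) \le C$, bounded distortion ($|DJ_h| \le C J_h$, available since $F$ is $\cC^2$), and the observation that terms involving $D\varphi(hx)$ pick up a harmless factor $Dh(x) = O(e^{-\lambda n})$, I obtain the pointwise bound
\[
\norm{D(\cL^n v_n)(x)} \le C\sup_z |v(x,z)| + C\sup_z \norm{Dv(x,z)}
\]
uniformly in $x \in \Delta_0$ and $n\in\bN$.

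The main obstacle is passing from this uniform pointwise bound to $\cC^1$ regularity of $\bar v$: a uniform $\cC^1$ bound on a pointwise-convergent sequence $\cL^n v_n\to\bar v$ only yields that $\bar v$ is Lipschitz. To upgrade, I would show $D(\cL^n v_n)$ is Cauchy in $L^\infty(\Delta_0)$. Using $\cL^n v_n - \cL^{n+m} v_{n+m} = \cL^{n+m}(\cU^m v_n - v_{n+m})$ and writing $h' = F^m \circ h \in \cH_n$ for $h \in \cH_{n+m}$, the difference kernel becomes
\[
(\cU^m v_n - v_{n+m})\circ h(x) = v(x, G_n(h'x, 0)) - v(x, G_n(h'x, G_m(hx, 0))),
\]
where the two $N$-arguments differ by $\le Ce^{-\lambda n}$ by~\eqref{eq:CG}. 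Differentiating and grouping, $D[(\cU^m v_n - v_{n+m})\circ h]$ splits into differences $D_u v(x,\xi_1)-D_u v(x,\xi_2)$ and $[D_s v(x,\xi_1)-D_s v(x,\xi_2)]D\xi_1$, which vanish uniformly by uniform continuity of $Dv$ on the compact $\widehat\Delta$, plus a remainder $D_s v(x,\xi_2)[D\xi_1-D\xi_2]$. The crux is to show $D\xi_1-D\xi_2\to 0$ uniformly. Setting $\delta_k = D[G_k(h'\,\cdot\,,0)](x) - D[G_k(h'\,\cdot\,,z^*)](x)$ with $z^* = G_m(hx,0)$, one obtains a matrix recursion $\delta_{k+1} = A_k\,\delta_k + B_k$ with $A_k = D_s G(F^k h'x, G_k(h'x,0))$, whose iterate is $\delta_n = \sum_k (\prod_{j>k} A_j)\,B_k$. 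The essential point is that the matrix product $\prod_{j=k+1}^{n-1} A_j$ equals $D_z G_{n-1-k}$ at an appropriate point, so by~\eqref{eq:CG} its norm is $O(e^{-\lambda(n-1-k)})$ even though individual $|A_j|$ may exceed $1$. Since $|B_k| \le C\omega(Ce^{-\lambda k})$, where $\omega$ is the modulus of continuity of $DG$ on compact $\widehat\Delta$, splitting the resulting sum at $k=n/2$ shows $|\delta_n|\to 0$ uniformly. Summing over $h$ weighted by $J_h$ then yields $|D\cL^{n+m}(\cU^m v_n - v_{n+m})|_\infty \to 0$ as $n\to\infty$ uniformly in $m$, so $D(\cL^n v_n)$ converges uniformly on $\Delta_0$ to some continuous function $g$. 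Combined with $\cL^n v_n \to \bar v$ uniformly, the standard convergence-of-derivatives theorem gives $\bar v\in\cC^1(\Delta)$ with $D\bar v=g$, and the pointwise bound above passes to the limit.
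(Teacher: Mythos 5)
Your proposal is correct and its overall architecture coincides with the paper's: a uniform pointwise bound on $D(\cL^n v_n)$ (the paper's Lemma~\ref{lem:estC1}, resting on the bound $\norm{D_uG_n(hx,0)\,Dh(x)}\le C$ of Lemma~\ref{lem:estD_uG}(a)), followed by the observation that this alone only gives Lipschitz, and an upgrade to $\cC^1$ by showing $D(\cL^nv_n)$ is uniformly Cauchy via the telescoping identity $\cL^nv_n-\cL^{n+m}v_{n+m}=\cL^{n+m}(\cU^mv_n-v_{n+m})$ and the splitting of the differentiated kernel into the same three groups the paper calls $J_1$, $J_2$, $J_3$ (Lemma~\ref{lem:estDK}).

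The one place you genuinely diverge is the key sub-estimate, and the comparison is instructive. The paper proves a quantitative Lipschitz bound $\norm{[D_uG_n(hx,z)-D_uG_n(hx,z')]\,Dh(x)}\le Ce^{-\lambda n}\norm{z-z'}$ (Lemma~\ref{lem:estD_uG}(b)) by an induction over blocks of length $n_0$, and its proof invokes $\max_{j\le n_0}\norm{G_j}_{\cC^2}$, i.e.\ Lipschitz continuity of $DG$. You instead run a one-step matrix recursion $\delta_{k+1}=A_k\delta_k+B_k$, identify $\prod_{j>k}A_j$ with $D_sG_{n-1-k}$ via the cocycle property so that \eqref{eq:CG} controls the product, and bound $B_k$ by the modulus of continuity $\omega$ of $DG$; splitting the sum at $k=n/2$ gives only a qualitative $\delta_n\to0$, which is all that is needed for the Cauchy argument. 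This buys you an argument that uses literally only $G\in\cC^1$ on the compact $\Delta\times N$, whereas the paper's quantitative lemma implicitly uses more regularity of $G$; the price is that you do not obtain the explicit rate $Ce^{-\lambda n}\norm{z-z'}$. One small point to tidy: your $\delta_k$ freezes $z^*=G_m(hx,0)$, so $\delta_n$ is not the whole of $D\xi_1-D\xi_2$; the full derivative of $x\mapsto G_n(h'x,G_m(hx,0))$ carries the extra term $D_sG_n(h'x,z^*)\,D[G_m(h\,\cdot\,,0)](x)$ (the paper's $J_3$), which is $O(e^{-\lambda n})$ by \eqref{eq:CG} together with your first step, but should be stated.
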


\noindent This fills a gap in~\cite{AV12} since there are inaccuracies in 
terms (30) and (31) therein.
\begin{rmk}
 The estimate of Proposition~\ref{prop:etaxC1} corresponds to property~(3) of~\cite[Definition 2.5]{AGY06}. 
Here we have an additional term, but the application of the estimate in~\cite[\S8]{AGY06} is unaffected. 
\end{rmk}

The remainder of this section is devoted to the proof of Proposition~\ref{prop:etaxC1}.

 For all $n\in \bN$ and $v\in\cC^0(\widehat{\Delta})$, let
 \[
	 M_nv(x) := (\cL^nv_n)(x)=
\sum_{h\in \cH_n} \big( \tfrac{\varphi \circ h}{\varphi} \cdot J_{h}  \big)(x)
\cdot v (x, G_n(hx,0)). 
\]
By Proposition~\ref{prop:etax}, 
$M_nv$ converges in $\cC^{0}(\Delta)$ and $M_nv(x)\to\eta_x(v)$.

We first show that $M_nv$ is Cauchy in $\cC^1(\Delta)$.
Recall that
$M_nv - M_{n+m}v =\cL^{n+m}K_{n,m}$ where
$K_{n,m}=\cU^mv_n-v_{n+m}$.  Hence
 \begin{equation}
 \label{eq:MnMm}
 M_nv - M_{n+m}v = \sum_{\ell\in \cH_{n+m}} \Big( \tfrac{\varphi \circ \ell}{\varphi} \cdot J_{\ell}  \Big) \cdot K_{n,m}\circ\ell.
 \end{equation}
 We note that
 $ K_{n,m}(\ell x) =
 v(x, G_n(F^m\circ \ell(x),0)) - v(x, G_{n+m}( \ell x,0))$.
 
 The compact manifold $N$ can be smoothly embedded as a submanifold of a vector space $\R^d$.  We fix such an embedding, so the quantity $D_uG_n(hx,z)-D_uG_n(hx,z')$ below is well-defined.

 \begin{lemma}
 \label{lem:estD_uG}
 (a)
 There is a constant $C>0$ such that
 for all $m,n\in \bN$ with $m\le n$, and all $h\in \cH_n$, $(x,z)\in \widehat\Delta_0$,
 \begin{align}
 \label{eq:estD_uG}
 & \norm{D_uG_m(hx,z) \ Dh(x)} \leq Ce^{-\lambda(n-m)}. \end{align}
 (b) There is a constant $C>0$ such that
 for all $n\in \bN$, $h\in \cH_n$, $(x,z),(x,z')\in \widehat\Delta_0$,
 \begin{align}
 \label{eq:estD_uG2}
 & \norm{[D_uG_n(hx,z)-D_uG_n(hx,z')] \ Dh(x)} \leq Ce^{-\lambda n}\norm{z-z'} .
 \end{align}
 \end{lemma}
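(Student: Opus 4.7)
The plan is to unwind $G_m(hx,z)$ along the orbit as a telescoping composition and exploit both contraction estimates~\eqref{eq:Ch} and~\eqref{eq:CG} inside a geometric sum. I will begin by introducing the auxiliary inverse branches $\ell_k:=F^k\circ h$ for $0\le k\le n$: since $F^{n-k}\circ\ell_k=F^n\circ h=\mathrm{id}$ on $\Delta_0$, each $\ell_k\in\cH_{n-k}$, and~\eqref{eq:Ch} gives $\|D\ell_k(x)\|\le C_\lambda e^{-\lambda(n-k)}$. Setting $w_k(x,z):=G_k(hx,z)$, the skew-product identity $\widehat F^k(hx,z)=(\ell_kx,w_k(x,z))$ yields the recursion $w_k=G(\ell_{k-1}x,w_{k-1})$ with $w_0=z$. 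Differentiating this recursion in $x$, iterating, and collapsing the resulting product of stable factors $\prod_{i=j+1}^k D_sG(\ell_{i-1}x,w_{i-1})$ via the chain rule for the composition $G_{k-j}$, I obtain the telescoping identity
\[
D_uG_k(hx,z)\cdot Dh(x)=\sum_{j=1}^{k}D_sG_{k-j}(\ell_jx,w_j)\,D_uG(\ell_{j-1}x,w_{j-1})\,D\ell_{j-1}(x).
\]

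Part~(a) is then immediate: take $k=m$ and bound the $j$-th summand by $C_\lambda e^{-\lambda(m-j)}\cdot\|D_uG\|_\infty\cdot C_\lambda e^{-\lambda(n-j+1)}$. Summing the geometric series in $j$ produces $O(e^{-\lambda(n-m)})$, which is~\eqref{eq:estD_uG}.

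For part~(b) I take $k=n$ and subtract the identities at $z$ and at $z'$. A crucial auxiliary bound is
\[
\|w_j(x,z)-w_j(x,z')\|=\|G_j(hx,z)-G_j(hx,z')\|\le C_\lambda e^{-\lambda j}\|z-z'\|,
\]
obtained by integrating~\eqref{eq:CG} along the segment from $z$ to $z'$ inside the ambient vector space into which $N$ is embedded. I then split each summand difference as
\[
D_sG_{n-j}(\ell_jx,w_j(x,z))\bigl[D_uG(\ell_{j-1}x,w_{j-1}(x,z))-D_uG(\ell_{j-1}x,w_{j-1}(x,z'))\bigr]
\]
\[
\qquad+\bigl[D_sG_{n-j}(\ell_jx,w_j(x,z))-D_sG_{n-j}(\ell_jx,w_j(x,z'))\bigr]D_uG(\ell_{j-1}x,w_{j-1}(x,z')).
\]
Using Lipschitz dependence of $D_uG$ on the stable variable, the first bracket is $O(e^{-\lambda j}\|z-z'\|)$, which once multiplied by $\|D_sG_{n-j}\|=O(e^{-\lambda(n-j)})$ gives $O(e^{-\lambda n}\|z-z'\|)$. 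Assuming an analogous Lipschitz estimate for $D_sG_{n-j}$ on the stable variable with constant $O(e^{-\lambda(n-j)})$ (established in the next paragraph), the second bracket is also $O(e^{-\lambda n}\|z-z'\|)$. Multiplying each summand by $\|D\ell_{j-1}(x)\|=O(e^{-\lambda(n-j+1)})$ and using the bounded geometric series $\sum_{j=1}^n e^{-\lambda(n-j+1)}=O(1)$ produces the claimed $O(e^{-\lambda n}\|z-z'\|)$, which is~\eqref{eq:estD_uG2}.

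The hardest step is the Lipschitz-with-decaying-constant dependence of $D_sG_{n-j}$ on its second argument. I will handle this by rewriting $D_sG_{n-j}(y,z)$ via the chain rule as the product $\prod_{i=1}^{n-j}D_sG(F^{n-j-i}y,G_{n-j-i}(y,z))$, telescoping the difference at $z$ and $z'$ factor by factor, and invoking Lipschitz dependence of $D_sG$ on the stable variable together with~\eqref{eq:CG}. Each partial product is then recognised as $D_sG_{\cdot}(\cdot,\cdot)$ via the chain rule, and a computation analogous to that in part~(a) sums the resulting geometric series to yield the required extra factor $e^{-\lambda(n-j)}$.
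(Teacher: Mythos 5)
Your argument is correct, but it takes a genuinely different route from the paper. The paper fixes a block length $n_0$ with $e^{-\lambda n_0}(1+C_\lambda)\le 1$ (resp.\ $e^{-\lambda n_0}C_\lambda^2<1$ in part (b)), uses the single splitting $G_m(hx,z)=G_{n_0}(\ell x,G_{m-n_0}(hx,z))$ with $\ell=F^{m-n_0}\circ h$, and runs an induction in steps of $n_0$; the choice of $n_0$ is exactly what makes the inductive constant close. You instead unwind all the way down to single applications of $G$, obtaining the Duhamel-type identity $D_uG_k(hx,z)Dh(x)=\sum_{j=1}^{k}D_sG_{k-j}(\ell_jx,w_j)\,D_uG(\ell_{j-1}x,w_{j-1})\,D\ell_{j-1}(x)$, and sum a geometric series; this avoids both the induction and the choice of $n_0$, at the cost of heavier bookkeeping, and I checked that the exponents do combine to give $e^{-\lambda(n-m)}$ in (a) and $e^{-\lambda n}\|z-z'\|$ in (b). Two points deserve attention. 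First, your part (b) genuinely requires the auxiliary estimate $\|D_sG_p(y,z)-D_sG_p(y,z')\|\le Ce^{-\lambda p}\|z-z'\|$ for \emph{all} $p$, which you only sketch; your telescoping of the chain-rule product does work (each term contributes $e^{-\lambda(p-1-i)}\cdot e^{-\lambda i}\cdot e^{-\lambda i}$ and the series converges), but this is an extra lemma that the paper's block induction absorbs into its $I_3$ term, where only the fixed block $D_sG_{n_0}$ ever needs a Lipschitz constant. Second, both your proof and the paper's require Lipschitz continuity of $D_uG$ and $D_sG$ in the fibre variable, i.e.\ second-order regularity of $G$; the section's hypotheses literally state only $G\in\cC^1$, but since the paper's own proof invokes $\max_{j\le n_0}\|G_j\|_{\cC^2}$, you are not assuming more than the authors do. The remaining minor technicality (replacing the straight segment from $z$ to $z'$ in the ambient $\R^d$ by a path in $N$ when integrating \eqref{eq:CG}) is handled at the same level of informality as in the paper.
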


 \begin{proof}
	 (a)	 Choose $n_0\ge1$ sufficiently large that $(1+C_\lambda) e^{-\lambda n_0}\le1$.  Let $C=C_\lambda\max_{j\le n_0}\norm{G_j}_{\cC^1}$.
	 Then for $m\le n_0$, and all $n\in\bN$, $h\in\cH_n$,
	 it follows from~\eqref{eq:Ch} that
	 \[
		 \norm{D_uG_m(hx,z) \ Dh(x)}\le \norm{G_m}_{\cC^1}C_\lambda e^{-\lambda n}\le Ce^{-\lambda(n-m)}.
	 \]

	 It remains to consider the case $m\ge n_0$.  We proceed by induction.
	 Let $h\in\cH_n$, $n\in\bN$.
Since $G_m(x,z) = G_{n_0}(F^{m-n_0}x,G_{m-n_0}(x,z))$, we have
$G_m(hx,z)=G_{n_0}(\ell x,G_{m-n_0}(hx,z))$
 where $\ell := F^{m-n_0}\circ h \in \cH_{n-m+n_0}$.
Hence
 \begin{align} \label{eq:a} 
  D_uG_m(hx,z) \ Dh(x)
  &=
  D_uG_{n_0}(\ell x, G_{m-n_0}(hx,z)) \ D\ell(x) \\
& \quad +
  D_sG_{n_0}(\ell x, G_{m-n_0}(hx,z)) \   D_uG_{m-n_0}(hx,z) \ Dh(x).
\nonumber\end{align}
Using the estimate \eqref{eq:Ch} and the induction hypothesis,
\begin{align*}
  \norm{  D_uG_m(hx,z) \ Dh(x)}
  & \leq 
  \norm{G_{n_0}}_{\cC^1} C_{\lambda} e^{-\lambda(n-m+n_0)}
  + C_\lambda e^{-\lambda n_0} C e^{-\lambda (n-m+n_0)}
  \\
  &
  \le C(1+C_\lambda)e^{-\lambda n_0}e^{-\lambda(n-m)}
  \le Ce^{-\lambda(n-m)},
  \end{align*}
completing the proof of part~(a).

\noindent (b) 
Choose $n_0\ge 1$ sufficiently large that $e^{-\lambda n_0}C_\lambda^2<1$.
Let $C_0=\max_{j\le n_0}\norm{G_j}_{\cC^2}$ and let $C_1$ be the constant in part (a).  Choose $C\ge C_0C_\lambda$ such that
	\[
		C_0C_\lambda^2 +C_0C_1C_\lambda + e^{-\lambda n_0}C_\lambda^2 C \le C.
	\]

For $n\le n_0$, $h\in\cH_n$, it follows from~\eqref{eq:Ch} that 
\[
	\norm{[D_uG_n(hx,z)-D_uG_n(hx,z')] \ Dh(x)} \leq C_0\norm{z-z'}C_\lambda e^{-\lambda n}\le Ce^{-\lambda n}\norm{z-z'},
\]
so it remains to consider the case $n\ge n_0$.

Starting from~\eqref{eq:a}, we have for all $m\ge n_0$,
\[
	[D_uG_m(hx,z)-D_uG_m(hx,z')] \ Dh(x) = I_1+I_2+I_3,
\]
where
\begin{align*}
	I_1 & = [D_uG_{n_0}(\ell x, G_{m-n_0}(hx,z))-D_uG_{n_0}(\ell x, G_{m-n_0}(hx,z'))] \ D\ell(x), \\
	I_2 & = [D_sG_{n_0}(\ell x, G_{m-n_0}(hx,z))-D_sG_{n_0}(\ell x, G_{m-n_0}(hx,z'))]
	\ D_uG_{m-n_0}(hx,z) \ Dh(x), \\
	I_3 & = D_sG_{n_0}(\ell x, G_{m-n_0}(hx,z')) \ 
	[D_uG_{m-n_0}(hx,z)-D_uG_{m-n_0}(hx,z')] \ Dh(x) .
\end{align*}
Using estimates~\eqref{eq:Ch},~\eqref{eq:CG} and part (a),
\begin{align*}
	\norm{I_1} 
	& \le  C_0\norm{G_{m-n_0}(hx,z)-G_{m-n_0}(hx,z')} C_\lambda e^{-\lambda(n-m+n_0)}
	 \le  C_0C_\lambda^2 e^{-\lambda n}\norm{z-z'}, \\
	\norm{I_2} & \le C_0\norm{G_{m-n_0}(hx,z)-G_{m-n_0}(hx,z')}
	C_1e^{-\lambda(n-m+n_0)}
	 \le C_0C_1C_\lambda e^{-\lambda n}\norm{z-z'}.
	\end{align*}

	Writing $h=k\circ\ell$ where $k\in \cH_{m-n_0}$, $\ell\in\cH_{n-m+n_0}$,
	\begin{align*}
		& [D_uG_{m-n_0}(hx,z)-D_uG_{m-n_0}(hx,z')] \ Dh(x)
		\\ & \qquad\qquad\qquad =
[D_uG_{m-n_0}(k(\ell x),z)-D_uG_{m-n_0}(k(\ell x),z')] \ Dk(\ell x) \ D\ell(x)
.
\end{align*}
It follows inductively that
\begin{align*}
		\| [D_uG_{m-n_0}(hx,z)-D_uG_{m-n_0}(hx,z')] \ Dh(x) \|
		& \le Ce^{-\lambda(m-n_0)} C_\lambda e^{-\lambda(n-m+n_0)}\norm{z-z'} \\ &
		= CC_\lambda e^{-\lambda n}\norm{z-z'}.
		\end{align*}
		Hence
\[
\norm{I_3}  \le C_\lambda^2e^{-\lambda n_0}C e^{-\lambda n}\norm{z-z'}.
\]
It follows from the choice of $n_0$ and $C$ that 
$\norm{[D_uG_m(hx,z)-D_uG_m(hx,z')] \ Dh(x)}\le Ce^{-\lambda n} \norm{z-z'}$ for all $m\ge n_0$, and the proof of (b) is complete.  
\end{proof}

 \begin{lemma}
 \label{lem:estDK}
Suppose $v\in \cC^{1}(\widehat\Delta)$ and that $K_{n,m}$ is defined as above.
Then
\[
	\sup_{m\in \bN}\sup_{\ell\in\cH_{n+m}} \sup_{x\in \Delta_0} \norm{ DK_{n,m}(\ell x)D\ell(x)}
\to 0,
\quad
\text{as $n \to \infty$}.
\] 
 \end{lemma}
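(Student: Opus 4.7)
The plan is to write $K_{n,m}(\ell x)$ in a form that isolates the exponentially contracting stable coordinate, differentiate along $\ell$ by the chain rule, and then combine the two parts of Lemma~\ref{lem:estD_uG} with the fibre contraction~\eqref{eq:CG} to show that each resulting term vanishes as $n\to\infty$, uniformly in $m$, $\ell\in\cH_{n+m}$ and $x\in\Delta_0$.

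First I would decompose $\ell\in\cH_{n+m}$ via $h:=F^m\circ\ell\in\cH_n$, so that $F^m(\ell x)=hx$. The cocycle identity $G_{n+m}(y,z)=G_n(F^my,G_m(y,z))$ then gives
\[
	K_{n,m}(\ell x)=v(x,G_n(hx,0))-v(x,G_n(hx,z_2)),\qquad z_2:=G_m(\ell x,0).
\]
Setting $a:=G_n(hx,0)$ and $b:=G_n(hx,z_2)$, the contraction estimate~\eqref{eq:CG} yields $\norm{a-b}\le C_\lambda e^{-\lambda n}\diam N$, uniformly in $m,\ell,x$. Applying the chain rule produces four terms in $DK_{n,m}(\ell x)\,D\ell(x)$, which split into a $D_uv$-pair and a $D_sv$-pair. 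The $D_uv$-pair is $D_uv(x,a)-D_uv(x,b)$; this tends to $0$ uniformly by uniform continuity of $D_uv$ on the compact space $\widehat\Delta$, since $\norm{a-b}\to0$ uniformly.

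The $D_sv$-pair
\[
T_1-T_2:=D_sv(x,a)\,D_uG_n(hx,0)\,Dh(x)-D_sv(x,b)\,D_uG_{n+m}(\ell x,0)\,D\ell(x)
\]
is more delicate because the individual terms are only bounded (via Lemma~\ref{lem:estD_uG}(a) with $m=n$), not small. Differentiating the cocycle formula in $x$ at $z=0$ yields the key identity
\[
D_uG_{n+m}(\ell x,0)\,D\ell(x)=D_uG_n(hx,z_2)\,Dh(x)+D_sG_n(hx,z_2)\,D_uG_m(\ell x,0)\,D\ell(x),
\]
after which substituting into $T_2$ and regrouping produces three summands:
\begin{align*}
T_1-T_2&=\bigl[D_sv(x,a)-D_sv(x,b)\bigr]\,D_uG_n(hx,0)\,Dh(x)\\
&\quad+D_sv(x,b)\,\bigl[D_uG_n(hx,0)-D_uG_n(hx,z_2)\bigr]\,Dh(x)\\
&\quad-D_sv(x,b)\,D_sG_n(hx,z_2)\,D_uG_m(\ell x,0)\,D\ell(x).
\end{align*}
The first summand is controlled by uniform continuity of $D_sv$ together with Lemma~\ref{lem:estD_uG}(a) (which bounds the trailing factor by $C$); the second by boundedness of $D_sv$ and Lemma~\ref{lem:estD_uG}(b) (yielding $O(e^{-\lambda n})$); the third by boundedness of $D_sv$, the contraction~\eqref{eq:CG} for $D_sG_n$, and Lemma~\ref{lem:estD_uG}(a) applied with parameters $(n+m,m,\ell)$ (giving $O(e^{-2\lambda n})$).

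The main obstacle will be finding the correct telescoping: the delicate step is introducing the intermediate quantity $D_uG_n(hx,z_2)\,Dh(x)$ via the cocycle identity, so that each of the three resulting pieces is controlled by exactly one of Lemma~\ref{lem:estD_uG}(a), Lemma~\ref{lem:estD_uG}(b), or the fibre contraction~\eqref{eq:CG}. Once this decomposition is in place, all estimates are uniform in $m,\ell,x$ and tend to zero as $n\to\infty$, establishing the claim.
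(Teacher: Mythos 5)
Your proposal is correct and follows essentially the same route as the paper: after the same cocycle decomposition $K_{n,m}(\ell x)=v(x,G_n(hx,0))-v(x,G_n(hx,G_m(\ell x,0)))$, your three summands for the $D_sv$-pair coincide exactly with the paper's terms $J_2'$, $J_2''$ and $J_3$, each estimated by the same combination of uniform continuity of $Dv$, Lemma~\ref{lem:estD_uG}(a), Lemma~\ref{lem:estD_uG}(b) and~\eqref{eq:CG}. The only (cosmetic) difference is that you differentiate first and then apply the differentiated cocycle identity, whereas the paper substitutes the cocycle formula before differentiating.
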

 \begin{proof}
Recall that $ K_{n,m}(\ell x) =
 v(x, G_n(F^m\circ \ell(x),0)) - v(x, G_{n+m}( \ell x,0))$.
 Since $G_{n+m}( \ell x,0) = G_n( F^m\circ \ell(x),G_m(\ell x,0))$,
 \[
 K_{n,m}(\ell x)
  =
 v(x, G_n(hx,0)) - v(x, G_n( hx,G_m(\ell x,0))),
 \]
 where for convenience we write $h = F^m \circ \ell$. 
 Differentiating we obtain 
 \begin{equation}
 \label{eq:diffK}
	 DK_{n,m}(\ell x) D\ell(x) = J_1+J_2-J_3,
	\end{equation}
	where writing $z=G_m(\ell x,0)$,
\begin{align*}
  J_1 &= 
 D_{u}v(x,G_n(hx,0))-
  D_{u}v(x,G_n(hx,z)), \\
  J_2 & =
  D_{s}v(x,G_n(hx,0)) \ D_uG_n(hx,0) \ Dh(x) \\ & \qquad -
   D_{s}v(x,G_n(hx,z)) \
   D_uG_n(hx, z) \ Dh(x),\\
  J_3 &  = 
  D_{s}v(x,G_n(hx,z)) \
   D_sG_n(hx, z) \ D_{u}G_m(\ell x,0) \ D\ell(x).
 \end{align*}

 By~\eqref{eq:CG},
\[ 
	\norm{G_n(hx,0)- G_n(hx,z)}
\leq C_\lambda e^{-\lambda n}\diam(\pi^{-1}(x)).
 \]
 Therefore, by the uniform continuity of $Dv$, we have that $\norm{J_1}\to0$ uniformly in $x$, $\ell$ and $m$ as $n\to\infty$.

 Next, $J_2=J_2'+J_2''$ where
 \begin{align*}
  J_2' & =
 [D_{s}v(x,G_n(hx,0)) -D_{s}v(x,G_n(hx,z)) ]\ D_uG_n(hx,0) \ Dh(x),  \\
  J_2'' & =
   D_{s}v(x,G_n(hx,z)) \ [D_uG_n(hx,0) - D_uG_n(hx, z)] \ Dh(x).
 \end{align*}
 The same argument used for $J_1$ shows that
 \[
\sup_{x,\ell,m}  \norm{  D_{s}v(x,G_n(hx,0))  - D_{s}v(x,G_n(hx,z)) } \to 0
 \]
 as $n\to \infty$.
 Combining this with Lemma~\ref{lem:estD_uG}(a) we obtain
 that $\norm{J_2'}\to0$ uniformly in $x$, $\ell$ and $m$ as $n\to\infty$.

 The first factor of $J_2''$ is bounded by $\norm{v}_{\cC^1}$, so it follows from
 Lemma~\ref{lem:estD_uG}(b) that $\norm{J_2''}\to0$ uniformly in $x$, $\ell$ and $m$ as $n\to\infty$.

 Turning to $J_3$, the three factors are bounded by 
 $\norm{v}_{\cC^1}$, $C_\lambda e^{-\lambda n}$ and
 $Ce^{-\lambda(n+m-m)}=Ce^{-\lambda n}$ respectively, where we have used~\eqref{eq:CG} and
Lemma~\ref{lem:estD_uG}(a). 
Hence 
 $\norm{J_3}\to0$ uniformly in $x$, $\ell$ and $m$ as $n\to\infty$.
The combination of these estimates completes the proof of the lemma.
 \end{proof}

 A standard consequence of the assumptions used in this section is that there exists $C_{d}>0$ such that 
 \begin{equation}
 \label{eq:DJn}
  \sum_{h\in \cH_n} \norm{{D  J_{h}(x)}} \leq C_{d}
  \quad \quad
  \text{ for all $n\in \bN$, $x\in \Delta_0$.}
 \end{equation}
Observe that, differentiating \eqref{eq:MnMm},
 \[
	 D(M_nv - M_{n+m}v) 
  =
  \sum_{\ell\in \cH_{n+m}} D\Big( \frac{\varphi \circ \ell}{\varphi} \cdot J_{h}  \Big) \cdot K_{n,m}\circ\ell
   +
   \Big( \frac{\varphi \circ \ell}{\varphi} \cdot J_{h}  \Big) \cdot \Big(DK_{n,m}\Bigr)\circ\ell \ D\ell.
 \]
 Using  \eqref{eq:DJn} and  Lemma~\ref{lem:estDK}, together with the previously proven fact \eqref{eq:C0} that $\sup_{x,m}\norm{K_{n,m}(x)} \to 0$ as $n\to \infty$ and that $ \sum_{\ell\in \cH_n}   \Big( \frac{\varphi \circ \ell}{\varphi} \cdot J_{h}  \Big)(x) = 1$ proves that
 \[
	 \sup_{x\in\Delta_0} \sup_{m\in\bN} \norm{D(M_nv - M_{n+m}v)(x)  } \to 0
 \]
 as $n\to \infty$  and hence the sequence $M_nv$ is Cauchy in $\cC^{1}(\Delta)$.
 This proves the first claim of Proposition~\ref{prop:etaxC1}, namely that $\bar{v}\in\cC^{1}(\Delta)$.
 Moreover, we have shown that $M_nv\to\bar{v}$ in $\cC^1(\Delta)$ so it remains to show that there exists $C>0$ such that
 $\norm{M_nv}_{\cC^1}\le C\norm{v}_{\cC^1}$.
 It is clear that $\norm{M_nv}_{\cC^0}\le \norm{v}_{\cC^0}$, so it remains to prove: 
\begin{lemma}
\label{lem:estC1}
There exists $C>0$ such that,
for all $v\in \cC^1(\widehat \Delta)$, $n\in \bN$, $x\in\Delta_0$,
\[
 \norm{DM_nv(x)} \leq 
C \sup_{z\in N} \abs{v(x,z)}+
C \sup_{z\in N} \norm{Dv(x,z)}.
\]
\end{lemma}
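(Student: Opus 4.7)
The plan is to differentiate the explicit formula for $M_nv$ and bound each resulting summand using the estimates already established. Setting $P_h := (\varphi\circ h/\varphi)\cdot J_h$, we have
\[
M_nv(x) = \sum_{h\in\cH_n} P_h(x)\, v(x, G_n(hx,0)),
\]
so the product rule gives $DM_nv(x) = A(x) + B(x)$, where $A$ collects terms in which the derivative falls on $P_h$, and $B$ collects terms in which it falls on $v(x,G_n(hx,0))$. The strategy is to treat $B$ by exploiting the identity $\sum_h P_h \equiv 1$, and to treat $A$ by summing derivatives using the distortion bound \eqref{eq:DJn}.

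For $B$, the chain rule gives
\[
D[v(x,G_n(hx,0))] = D_uv(x,G_n(hx,0)) + D_sv(x,G_n(hx,0))\, D_uG_n(hx,0)\, Dh(x).
\]
The first term is bounded by $\sup_{z\in N}\norm{Dv(x,z)}$. For the second, Lemma~\ref{lem:estD_uG}(a) applied with $m=n$ gives $\norm{D_uG_n(hx,0)\, Dh(x)} \le C$, so the full expression is bounded by $(1+C)\sup_z \norm{Dv(x,z)}$. Since $\cL\cU = I$ and $\cU 1 = 1$ force $\cL^n 1 \equiv 1$, we have $\sum_h P_h(x) = 1$ pointwise, and hence $\norm{B(x)} \le C\sup_z\norm{Dv(x,z)}$.

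For $A$, expanding by the product rule,
\[
DP_h = \frac{(D\varphi\circ h)\, Dh}{\varphi}\, J_h - \frac{\varphi\circ h}{\varphi^2}\, D\varphi\cdot J_h + \frac{\varphi\circ h}{\varphi}\, DJ_h.
\]
Since $\varphi$ is $\cC^1$ and bounded away from zero and infinity (standard for $\cC^2$ uniformly expanding maps), and bounded distortion gives a uniform bound on $\sum_h \abs{J_h}_\infty$, the first summand contributes $O(e^{-\lambda n})$ via $\abs{Dh}\le C_\lambda e^{-\lambda n}$ from \eqref{eq:Ch}; the second contributes $C\sum_h P_h = C$; and the third is bounded by $C\sum_h \norm{DJ_h}\le CC_d$ via \eqref{eq:DJn}. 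Multiplying by $\abs{v(x, G_n(hx,0))} \le \sup_z\abs{v(x,z)}$ yields $\norm{A(x)} \le C\sup_z\abs{v(x,z)}$, and combining with the estimate on $B$ completes the proof.

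The only structural point worth highlighting is the identity $\sum_h P_h \equiv 1$, which is what makes the bound on $B$ uniform in $n$; all remaining pieces are routine applications of \eqref{eq:Ch}, \eqref{eq:DJn}, and Lemma~\ref{lem:estD_uG}(a). No new contraction estimate in $n$ is required, since the target is a uniform-in-$n$ $\cC^1$ bound rather than a Cauchy-type estimate as in the preceding convergence argument.
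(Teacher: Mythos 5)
Your proposal is correct and follows essentially the same route as the paper: differentiate the explicit sum, bound the terms where the derivative falls on $v(x,G_n(hx,0))$ by $C\sup_z\norm{Dv(x,z)}$ using Lemma~\ref{lem:estD_uG}(a) and $\sum_h(\varphi\circ h/\varphi)J_h\equiv 1$, and bound the terms where it falls on the weights via~\eqref{eq:Ch},~\eqref{eq:DJn} and the regularity of $\varphi$. Your write-up merely spells out the weight-derivative bookkeeping that the paper compresses into ``using also the estimate from~\eqref{eq:DJn}''.
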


\begin{proof}
 Write
 $  M_nv(x) = \sum_{h\in \cH_n} ( \frac{\varphi \circ h}{\varphi} \cdot J_{h}  )(x) \cdot B_n(x)$,
 where
$B_n(x) = v(x, G_n(hx,0))$. 
 First we estimate $\norm{ DB_n(x)}$.
 Differentiating we obtain
 \[
  DB_n(x) 
  = 
  D_{u}v(x,G_n(hx,0))
  + 
  D_{s}v(x,G_n(hx,0)) \ D_uG_n(hx,0) \ Dh(x)
 \]
 and so by Lemma~\ref{lem:estD_uG},
 \[
  \norm{DB_n(x) }
  \leq
 \sup_{z \in N} \norm{Dv(x,z)} (1+ \norm{D_uG_n(hx,0) \ Dh(x)}) \le
 C\sup_{z \in N} \norm{Dv(x,z)}.
 \]
Clearly, $\abs{B_n(x)}\le 
 \sup_{z \in N} \abs{v(x,z)}$.
 Using also the estimate from \eqref{eq:DJn} 
 completes the proof of the lemma.
\end{proof}



\begin{thebibliography}{10}

\bibitem{ABV14}
V.~Ara{\'u}jo, O.~Butterley, and P.~Varandas.
\newblock Open sets of Axiom A flows with exponentially mixing attractors. 
\newblock 
\emph{Proc. Amer. Math. Soc.} 144:2971--2984, 2016.

\bibitem{AMV14}
V.~Ara{\'u}jo, I.~Melbourne, P.~Varandas.
\newblock Rapid mixing for the Lorenz attractor and statistical limit laws for their time-1 maps.
\newblock {\em Commun. Math. Phys.} 340:901--938, 2015.

\bibitem{APPV09}
V.~Ara{\'u}jo, M.~J. Pacifico, E.~R. Pujals, and M.~Viana. Singular-hyperbolic
  attractors are chaotic. 
\emph{Trans. Amer. Math. Soc.} 361:2431--2485, 2009.

\bibitem{AV12}
V.~Ara{\'u}jo and P.~Varandas.
\newblock Robust exponential decay of correlations for singular-flows.
\newblock {\em  Comm. Math. Phys.}, 311:215--246, 2012.

\bibitem{AGY06}
A.~Avila, S.~Gou{\"e}zel, and J.-C. Yoccoz.
\newblock {Exponential mixing for the Teichm{\"u}ller flow}.
\newblock {\em {Publ. Math. Inst. Hautes {\'E}tudes Sci.}}, {104}:{143--211},
  {2006}.

\bibitem{Bowen75}
R.~Bowen.
\newblock {\em {Equilibrium states and the ergodic theory of Anosov
  diffeomorphisms}}, volume {470} of {\em {Lect. Notes in Math.}}
\newblock {Springer Verlag}, {1975}.

\bibitem{GP}
S. Galatolo and M. J. Pacifico. Lorenz like flows: exponential decay of
correlations for the Poincar\'e map, logarithm law, quantitative recurrence.
{\em Ergodic Thy. Dynam. Syst.}, 30:1703--1737, 2010.

\bibitem{Rohlin52}
	V. A. Rohlin. On the fundamental ideas of measure theory.
	{\em Amer. Math. Soc. Translation}, 71:27.2X, 1952.
\end{thebibliography}
\end{document}